\newtheorem{theorem}{Theorem}
\newtheorem{corollary}[theorem]{Corollary}
\newtheorem{lemma}[theorem]{Lemma}
\newtheorem{example}{Example}
\theoremstyle{definition}
\newcommand{\Z}{\mathbb{Z}}
\newcommand{\Lb}{\mathbf{L}}
\newcommand{\ellb}{\mathbf{\ell}}
\DeclarePairedDelimiterX{\norm}[1]{\lVert}{\rVert}{#1}
\begin{document}

\begin{frontmatter}

%% Title, authors and addresses

%% use the tnoteref command within \title for footnotes;
%% use the tnotetext command for the associated footnote;
%% use the fnref command within \author or \address for footnotes;
%% use the fntext command for the associated footnote;
%% use the corref command within \author for corresponding author footnotes;
%% use the cortext command for the associated footnote;
%% use the ead command for the email address,
%% and the form \ead[url] for the home page:
%%
%% \title{Title\tnoteref{label1}}
%% \tnotetext[label1]{}
%% \author{Name\corref{cor1}\fnref{label2}}
%% \ead{email address}
%% \ead[url]{home page}
%% \fntext[label2]{}
%% \cortext[cor1]{}
%% \address{Address\fnref{label3}}
%% \fntext[label3]{}

%% Use \dochead if there is an article header, e.g. \dochead{Short communication}
%% \dochead can also be used to include a conference title, if directed by the editors
%% e.g. \dochead{17th International Conference on Dynamical Processes in Excited States of Solids}

\title{Generalizing Geometric Nonwindowed Scattering Transforms on Compact Riemannian Manifolds}

%% use optional labels to link authors explicitly to addresses:
%% \author[label1,label2]{<author name>}
%% \address[label1]{<address>}
%% \address[label2]{<address>}

\author[msu]{Albert Chua}
\ead{chuaalbe@msu.edu}
\author[cmse]{Yang Yang}
\ead{yangy5@msu.edu}
\address[msu]{Department of Mathematics, Michigan State University, East Lansing, MI, 48824 USA}
\address[cmse]{Department of Computational Mathematics, Science \&
Engineering, Michigan State University, East Lansing, MI, 48824 USA}
\begin{abstract}
Let $\mathcal{M}$ be a compact, smooth, $n$-dimensional Riemannian manifold without boundary. In this paper, we generalize nonwindowed geometric scattering transforms, which we formulate as $\Lb^q(\mathcal{M})$ norms of a cascade of geometric wavelet transforms and modulus operators. We then provide weighted measures for these operators, prove that these operators are well-defined under specific conditions on the manifold, invariant to the action of isometries, and stable to diffeomorphisms for $\lambda$-bandlimited functions. 
\end{abstract}

\begin{keyword}
%% keywords here, in the form: keyword \sep keyword
Wavelet Scattering, Geometric Deep Learning, Spectral Geometry
%% PACS codes here, in the form: \PACS code \sep code

%% MSC codes here, in the form: \MSC code \sep code
%% or \MSC[2008] code \sep code (2000 is the default)

\end{keyword}

\end{frontmatter}

\section{Introduction}
For the purposes of this paper $\mathcal{M}$ will be a compact, smooth, $n$-dimensional Riemannian manifold without boundary contained in $\mathbb{R}^d$, where $d \geq n$ with geodesic distance between two points $x_1, x_2 \in \mathcal{M}$ given by $r(x_1, x_2)$ and Laplace-Beltrami operator denoted as $\Delta$. The notation $\Lb^q(\mathcal{M})$ denotes the set of all functions $f: \mathcal{M} \to \mathbb{R}$ such that $\int_{\mathcal{M}} |f(x)|^q \, d\mu(x) < \infty,$ where $d\mu(x)$ is integration with respect to the Riemannian volume, whose measure is given by $\mu$. We use the notation $\text{Isom}(\mathcal{M}_1, \mathcal{M}_2)$ be the set of isometries between manifolds $\mathcal{M}_1$ and $\mathcal{M}_2$. Lastly, the set of diffeomorphisms on $\mathcal{M}$ will be denoted by $\text{Diff}(\mathcal{M})$, and the maximum placement of $\gamma \in \text{Diff}(\mathcal{M})$ will be given by $\|\gamma\|_\infty := \sup_{x \in \mathcal{M}} r(x, \gamma(x))$.

In recent years, deep convolutional neural networks have shown strong performance on various vision-related tasks \cite{VGG, googlenet, resnet}. However, because of how complex deep convolutional architectures are, it is not entirely clear what mechanisms enable deep convolutional networks to get strong performance on these tasks.

In an effort to better understand the properties of deep convolutional architectures, Mallat proposed the scattering transform \cite{InvariantScatteringNetworks, mallat2012group}, which uses a cascade of unlearned filters and nonlinearities to mimic the behavior of a deep convolutional neural network. Using a specific class of wavelet filters, Mallat found that the scattering transform had many desirable properties for machine learning tasks such as translation invariance and stability to small deformations. Additionally, the scattering transform and its generalizations have been found various applications as a general feature extractor, such as in \cite{QMWavelet1, QMWavelet2, QMWavelet3, allys2019rwst, bruna2013audio, bruna2015intermittent, anden2011multiscale, anden2014deep, anden2019joint, sifre2012combined, sifre2013rotation, oyallon2015deep, oyallon2017scaling}. Authors have also explored extensions of the scattering transform to semi-discrete frames \cite{wiatowski, wiatowski2017energy, koller} and more general gabor frames \cite{czaja, czaja2020rotationally}.

However, certain forms of data, such as point cloud data have a geometry than is non-Euclidean, which motivate manifold learning models \cite{tenenbaum2000global, coifman2006diffusion, van2008visualizing} and geometric deep learning \cite{bronstein2017geometric}. As an extension of the scattering transform, authors have considered graph scattering transforms in \cite{graph, gama, gama2018diffusion, zou2020graph, perlmutter2023asymmetric} via constructing graph wavelets \cite{coifman2006diffusion, hammond2011wavelets}; additionally \cite{saito2023multiscale, saito2024multiscale} extend wavelets and the scattering transform to simplicial complexes; the overarching idea is that these extensions of the scattering transform have similar desirable stability properties, and present success on non-Euclidean datasets.

Extensions of the scattering transform have also been presented on compact Riemannian manifolds via defining the wavelet transform on a compact Riemannian manifold using eigenfunctions of the Laplace-Beltrami operator in \cite{geometric}. In particular, for the windowed geometric scattering transform, the authors were able to prove that the representation was locally invariant to isometries for all $\Lb^2(\mathcal{M})$ functions and stable to deformations in $\text{Diff}(\mathcal{M})$ for $\Lb^2(\mathcal{M})$ under mild restrictions. The authors derive a representation, the nonwindowed geometric scattering transform, which is invariant to isometries and stable to deformations in $\text{Diff}(\mathcal{M})$. 

Regarding nonwindowed geometric scattering transforms, their Euclidean counterparts, nonwindowed scattering transforms, have been effective for applications in quantum chemistry, audio synthesis, and physics have appeared in \cite{QMWavelet1, QMWavelet2, QMWavelet3, allys2019rwst, bruna2013audio}; theoretical results involving stability to deformations have also been provided in \cite{chua2023generalizations}. The main idea behind nonwindowed scattering transforms is that they provide a small number of descriptive features for high dimensional data. As a natural extension, \cite{chew2022manifold} used scattering moments on manifolds for classifications tasks involving point cloud data. However, there were limited theoretical results in \cite{chew2022geometric} for the representation provided in \cite{chew2022manifold}, which motivates this paper.

We provide a well-defined weighted measure for scattering moments when $f \in \Lb^q(\mathcal{M})$ for $q \in (1,2)$, which is an improvement upon requiring $f \in \Lb^2(\mathcal{M})$ . However, our weighted measure is only defined for an aribtrary, finite number of layers, and requires restrictions on the regularity of the manifold. Additionally, we prove a diffeomorphism stability result in the spirit of \cite{mallat2012group, geometric}.

\subsection{Invariance and Stability}
In machine learning tasks, it is often necessary for a representation to have some degree of invariance with respect to the action of a Lie group. For tasks involving manifolds, one may like to have local isometry invariance. More formally, let $V_\xi f(x)= f(\xi^{-1}x)$ for $\xi \in \text{Isom}(\mathcal{M})$ and consider a representation $\Phi: \mathcal{B}_1 \to \mathcal{B}_2$, where $\mathcal{B}_1, \mathcal{B}_2$ are Banach Spaces. It would be desirable to have a representation such that
$$\|\Phi f - \Phi V_\xi f\|_{\mathcal{B}_2} \leq 2^{-dJ} \|\xi\|_\infty \|f\|_{\mathcal{B}_1},$$
where $J$ controls the degree of invariance. A simple choice is to use an averaging filter, but this potentially leads to the loss of information that can be crucial for the task.

For other tasks, such as manifold classification, a fully rigid representation may be required, and full isometry invariance is desirable. That is to say, we have $$\|\Phi f\|_{\mathcal{B}_2} = \|\Phi V_\xi f\|_{\mathcal{B}_2}.$$

In addition to invariance, it is necessary for a representation to also have stability properties. Instead of considering an isometry, consider $\xi \in \text{Diff}(\mathcal{M})$ and think of $V_\xi f$ as a small deformation of $f$. We want
$$\|\Phi f - \Phi V_\xi f\|_{\mathcal{B}_2} \leq 2^{-dJ} K(\xi) \|f\|_{\mathcal{B}_1},$$
where $K(\xi) \to 0$ as $\|\xi\|_\infty \to 0$; this is to ensure that small deformations of an input do not lead to large changes in the representation.

\section{A Review of the Geometric Scattering Transform on Manifolds}

\subsection{Spectral Filters and the Geometric Wavelet Transform}
The convolution of $f,g \in \Lb^2(\mathbb{R}^n)$ is usually defined in space as 
$$(f \ast g)(x) = \int_{\mathbb{R}^n} f(y)g(x-y) \, dy.$$
However, for a general manifold, even under the conditions we have prescribed, a notation of translation does not necessarily exist. Instead, one can consider a spectral definition of convolution via the spectral decomposition of $-\Delta$. Denote $\mathbb{N}_0 := \mathbb{N} \cup\{0\}$. Because our manifold is compact, it is well known that $-\Delta$ has a discrete spectrum, and we can order the eigenvalues in increasing order and denote them as $\{\lambda_n\}_{n \in \mathbb{N}_0}$. We will denote the corresponding eigenfunctions as $\{e_n(x)\}_{n \in \mathbb{N}_0}$, which form an orthonormal basis for $\Lb^2(\mathcal{M})$.

Suppose $f \in \Lb^2(\mathcal{M})$. Since the set of functions $\{e_n(x)\}_{n \in \mathbb{N}_0}$ forms a basis in $\Lb^2(\mathcal{M})$, we decompose 
\begin{equation}
f(x) = \sum_{n \in \mathbb{N}_0} \langle f, e_n \rangle e_n(x) = \sum_{n \in \mathbb{N}_0} \left(\int_{\mathcal{M}} f(y) \overline{e_n(y)}\, d \mu (y)\right) e_n(x),
\end{equation} which is similar to a Fourier series. Since $e_n(y)$ is a replacement for a Fourier mode, it is natural to let 
\begin{equation}
\hat{f}(n) = \int_{\mathcal{M}} f(y) \overline{e_n(y)}\, d \mu(y)
\end{equation} and define convolution on $\mathcal{M}$ between functions $f,h \in \Lb^2(\mathcal{M})$ as
\begin{equation}
f \ast h(x) = \sum_{n \in \mathbb{N}_0} \hat{f}(n) \hat{h}(n) e_n(x).
\end{equation} Defining the operator $T_hf(x) := f \ast h(x)$, it is easy to verify that the kernel for $T_h$ is given by
\begin{equation}
\tilde{K}_h(x,y) := \sum_{n \in \mathbb{N}_0} \hat{h}(n)e_n(x)\overline{e_n(y)}.
\end{equation}
% For convenience in certain proofs, we group together functions with the same eigenspace via defining 
% $$\Lambda := \{\lambda \in \mathbb{R} : \exists e \in \Lb
% ^2\mathcal{M},  -\Delta e = \lambda e\}$$
% and define the kernel
% $$K(\lambda)
% (x, y) := \sum_{\lambda_n = \lambda} e_k(x)e_k(y)  \forall \lambda \in \Lambda.$$.
Similar to how convolution commutes with translations on $\mathbb{R}^n$, it is important for convolution on $\mathcal{M}$ to be equivariant to a group action on $\mathcal{M}$. The authors of \cite{geometric} construct an operator by convolving with functions that commute with isometries since the the geometry of $\mathcal{M}$ should be preserved by a representation.

To accomplish this goal, we use a similar definition for spectral filters. A filter $h \in \Lb^2(\mathcal{M})$ is a spectral filter if $\lambda_k = \lambda_\ell$ implies $\hat{h}(k) = \hat{h}(\ell)$. One can prove that there exists $H:[0, \infty) \to \mathbb{R}$ such that 
$$H(\lambda_n) = \hat{h}(n), \qquad \forall n \in \mathbb{N}_0.$$

Let $G : [0, \infty) \to \mathbb{R}$ be nonnegative and decreasing with $G(0) > 0$. A low-pass spectral filter $\phi$ is given in frequency as $\hat{\phi}(n) := G(\lambda_n)$ and its dilation at scale $2^j$ for $j \in \mathbb{Z}$ is $\hat{\phi}_j(n) := G(2^j \lambda_n)$. 

Using the set of low pass filters, $\{\hat{\phi}_j\}_{j \in \mathbb{Z}}$, we define wavelets by
\begin{equation} \label{eq: wavelet definition}
\hat{\psi}_j(n) := [|\hat{\phi}_{j-1}(n)|^2 -  |\hat{\phi}_{j}(n)|^2]^{1/2} = [|G(2^{j-1}n)|^2 -  |G(2^jn)|^2]^{1/2},
\end{equation} which are diffusion wavelets \cite{m2006diffusion}.

Fix $J \in \mathbb{Z}$. Define the operators
\begin{align*}
A_Jf &:= f \ast \phi_J, \\
\Psi_j f &:= f \ast \psi_j, \qquad j \leq J.
\end{align*}
The windowed geometric wavelet transform is given by
\begin{equation}
W_Jf := \{A_J f, \Psi_j f: \quad j  \leq J \}
\end{equation}
and the nonwindowed geometric wavelet transform is given by 
\begin{equation}
Wf := \{\Psi_j f: \quad j \in \mathbb{Z}\}.
\end{equation}

We have the following theorem, which provides a condition for when our wavelet frame is a nonexpansive frame.
\begin{theorem} \label{wavelet frame bounded in l2}
Let $G : [0, \infty) \to \mathbb{R}$ be nonnegative, decreasing, and continuous with $0 < G(0) = C$,  $\lim_{x \to \infty}G(x) = 0$, and $\{\psi_j\}_{j \in \mathbb{Z}}$ is a set of wavelets generated by using the low pass filter $\hat{\phi}(k) = G(\lambda_k)$ in Equation \ref{eq: wavelet definition}. Then we have
\begin{equation}
\sum_{j \in \mathbb{Z}} \|f \ast \psi_j\|_2^2  = C^2 \|f\|_2^2.   
\end{equation}
\end{theorem}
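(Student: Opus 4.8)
The plan is to reduce the identity to a pointwise computation on the spectral side via Parseval's theorem and then recognize the resulting series in $j$ as a telescoping sum.

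First I would use that $\{e_n\}_{n\in\mathbb{N}_0}$ is an orthonormal basis of $\Lb^2(\mathcal{M})$ together with the fact that, by the definition of spectral convolution, $\widehat{f\ast\psi_j}(n) = \hat f(n)\hat\psi_j(n)$. Parseval's identity then gives
$$\|f\ast\psi_j\|_2^2 = \sum_{n\in\mathbb{N}_0} |\hat f(n)|^2\,|\hat\psi_j(n)|^2 .$$
Summing over $j\in\mathbb{Z}$ and interchanging the two sums — which is legitimate by Tonelli's theorem, every summand being nonnegative — reduces the claim to showing $\sum_{j\in\mathbb{Z}}|\hat\psi_j(n)|^2 = C^2$ for each index $n$ with $\lambda_n>0$ (and treating the $\lambda_n=0$ mode separately).

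Next, fixing such an $n$ and invoking Equation~\ref{eq: wavelet definition}, I would write $|\hat\psi_j(n)|^2 = |G(2^{j-1}\lambda_n)|^2 - |G(2^{j}\lambda_n)|^2$; this is nonnegative because $G$ is nonnegative and decreasing, which also confirms the wavelets are well-defined. The partial sums over $j\in\{-N,\dots,M\}$ telescope to $|G(2^{-N-1}\lambda_n)|^2 - |G(2^{M}\lambda_n)|^2$. Letting $M\to\infty$ and using $\lim_{x\to\infty}G(x)=0$ eliminates the second term, while letting $N\to\infty$ and using the continuity of $G$ at the origin together with $G(0)=C$ sends the first term to $C^2$. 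Hence $\sum_{j\in\mathbb{Z}}|\hat\psi_j(n)|^2 = C^2$ whenever $\lambda_n>0$. For the zero eigenvalue $\lambda_0=0$ one has $\hat\psi_j(0) = (|G(0)|^2 - |G(0)|^2)^{1/2} = 0$ for every $j$, so that mode contributes nothing, which is the reason the identity reads $C^2\|f\|_2^2$ (equivalently, $C^2$ times the squared norm of the mean-subtracted part of $f$; this bookkeeping point should be stated carefully). Substituting back,
$$\sum_{j\in\mathbb{Z}}\|f\ast\psi_j\|_2^2 = \sum_{n\in\mathbb{N}_0}|\hat f(n)|^2\sum_{j\in\mathbb{Z}}|\hat\psi_j(n)|^2 = C^2\sum_{n\in\mathbb{N}_0}|\hat f(n)|^2 = C^2\|f\|_2^2 .$$

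The computation is essentially elementary, so I do not expect a genuine obstacle; the only points requiring care are the endpoint analysis of the telescoping series — in particular invoking continuity of $G$ at $0$ to identify the $N\to\infty$ limit with $C^2 = G(0)^2$, rather than merely with $\sup_x G(x)^2$ — and the separate accounting for the constant eigenfunction, which forces one to interpret the right-hand side appropriately. The Parseval step and the interchange of summations are routine.
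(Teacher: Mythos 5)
Your argument is correct and follows essentially the same route as the paper: Parseval reduces the claim to evaluating $\sum_{j\in\mathbb{Z}}|\hat\psi_j(k)|^2$, which telescopes. In fact your version is the more careful one. The paper telescopes the symmetric partial sum over $-J\le j\le J$ and asserts the two limits give $C^2-0$; it does not invoke continuity of $G$ at $0$ explicitly (needed to identify $\lim_{J\to\infty}G(2^{-J-1}\lambda_k)$ with $G(0)=C$ rather than merely $\sup G$), and it silently applies the conclusion to \emph{every} $k$, including $k=0$. Your observation about the zero eigenvalue is a genuine catch: on a compact connected manifold $\lambda_0=0$ with constant eigenfunction, so $\hat\psi_j(0)=0$ for all $j$ and the $n=0$ mode contributes nothing to the left-hand side while contributing $|\hat f(0)|^2$ to $\|f\|_2^2$. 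The identity as stated therefore only holds in the form $\sum_{j}\|f\ast\psi_j\|_2^2=C^2\bigl(\|f\|_2^2-|\hat f(0)|^2\bigr)$, i.e.\ as an equality for mean-zero $f$ and as the inequality $\le C^2\|f\|_2^2$ (nonexpansiveness) in general — which is how the frame bound is actually used later in the paper. So your bookkeeping remark is not pedantry; it is a correction the paper's own proof needs.
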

\begin{proof}
For fixed $J > 1$, we telescope to get 
\begin{align*}
\sum_{j = -J}^J |\hat{\psi}_j(k)|^2 &= \sum_{j = - J}^J \left[|G(2^{j-1}\lambda_k)|^2 - |G(2^{j}\lambda_k)|^2\right] \\
&= |G(2^{-J-1}\lambda_k)|^2 - |G(2^{J}\lambda_k)|^2.
\end{align*}
Since $\lim_{J \to \infty} |G(2^{-J-1}\lambda_k)|^2$ and $\lim_{J \to \infty} |G(2^{J}\lambda_k)|^2$ both exist, it follows that 
$$
\sum_{j \in \mathbb{Z}} |\hat{\psi}_j(k)|^2 = \lim_{J \to \infty} |G(2^{-J-1}\lambda_k)|^2 - \lim_{J \to \infty} |G(2^{J}\lambda_k)|^2 = C^2.
$$

We can write
$$\|f \ast \psi_j\|_2^2 = \sum_{k \in \mathbb{N}_0} |\hat{\psi}_j(k)|^2 |\hat{f}(k)|^2.$$ 
Thus, it follows that 
\begin{align*}
\sum_{j \in \mathbb{Z}} \|f \ast \psi_j\|_2^2 &= \sum_{j \in \mathbb{Z}} \sum_{k \in \mathbb{N}_0} |\hat{f}(k)|^2 |\hat{\psi}_j(k)|^2 \\
&= \sum_{k \in \mathbb{N}_0} |\hat{f}(k)|^2 \left(\sum_{j \in \mathbb{Z}}   |\hat{\psi}_j(k)|^2\right) \\
&= C^2\|f\|_2^2.
\end{align*}
\end{proof}
% \Yang{The constant $C$ should be replaced by $C^2$, see the calculation above. This constant also appears in the statement of Theorem 1 and a few times later.}

% \textcolor{blue}{Albert: This is a proof in case the wavelet we use isn't Schwartz anymore. For the old definition of the wavelets, given by
% $$\hat{\psi}_j(n) = |\hat{\phi}_{j-1}(n)|^2 -  |\hat{\phi}_{j}(n)|^2,$$
% assume that we also have $0 < G(0) = C < 1.$ Since $G$ is decreasing we see that 
% $$0 \leq |\hat{\phi}_{j-1}(n)|^2 -  |\hat{\phi}_{j}(n)|^2< 1.$$
% Hence, 
% $$|\psi_j(n)|^2 = (|\hat{\phi}_{j-1}(n)|^2 -  |\hat{\phi}_{j}(n)|^2)^2 \leq |\hat{\phi}_{j-1}(n)|^2 -  |\hat{\phi}_{j}(n)|^2$$
% and the proof can proceed as before, but with inequality rather than an isometry.}

\subsection{The Geometric Scattering Transform}
In an analogous manner to the Euclidean definition of the scattering transform, it is valuable to find a representation that meaningfully encodes high frequency information of a signal $f$. Define the propagator as
\begin{equation}
U[j]f := |\Psi_j f| \qquad \forall j \in \mathbb{Z},
\end{equation} which is convolution of a wavelet and applying a nonlinearity; we can also define the windowed propagator as 
\begin{equation}
U_J[j]f := |\Psi_j f| \qquad \forall j \leq J.
\end{equation} Similar to scattering transforms on Euclidean space, one can apply a cascade of convolutions and modulus operators repeatedly. In particular, for $m \in \mathbb{N}$, let $j_1 , \ldots, j_m \in \mathbb{Z}$. The $m$-layer propagator is defined as 
\begin{equation}
U[j_1, \ldots, j_m] f := U[j_m] \cdots U[j_1]f = | | |f \ast \psi_{j_1}| \ast \psi_{j_2}| \cdots \ast \psi_{j_m}|
\end{equation} 
and the $m$-layer windowed propagator is defined as 
\begin{equation}
U[j_1, \ldots, j_m] f := U[j_m] \cdots U[j_1]f = | | |f \ast \psi_{j_1}| \ast \psi_{j_2}| \cdots \ast \psi_{j_m}|, \qquad j_1, \ldots, j_m \leq J
\end{equation} 
with $U[\emptyset]f = f$ and $U_J[\emptyset]f = f.$ To aggregate low frequency information and get local isometry invariance, one can apply a low pass filter in a manner similar to pooling to each windowed propagator to get windowed scattering coefficients:
\begin{equation*}
S_J[j_1, \ldots, j_m] = A_J U_J[j_1, \ldots, j_m]f = U_J[j_1, \ldots, j_m]f \ast \phi_J,
\end{equation*}
where we defined $S_J[\emptyset]f = f \ast \phi_J$. The windowed geometric scattering transform is given by 
\begin{equation}
S_Jf = \{S_j[j_1, \ldots, j_m]f: \, m \geq 0, \quad j_i \leq J \quad \forall 1 \leq i \leq m\}.
\end{equation} 
The authors of \cite{geometric} were able to prove that this windowed scattering operator was nonexpansive, invariant to isometries up to the scale of the low pass filter, and stable to diffeomporhisms under mild assumptions.

In addition, the authors consider a nonwindowed scattering transform, which removes the low pass filtering. For applications such as manifold classification, one desires full isometry invariance instead of isometry invariance up to scale $2^J$. We see that
\begin{equation}
\lim_{J \to \infty}S[j_1, \ldots, j_m]f = \text{vol}(\mathcal{M})^{-1/2}\|U[j_1, \ldots, j_m]f\|_1.
\end{equation}
As a proxy, one can consider
\begin{equation}
\overline{S}f[j_1, \ldots, j_m] =\|U[j_1, \ldots, j_m]f\|_1,
\end{equation} 
%\Yang{The notation for the general case is $\bar{S}^m_q$ for $q\in [1,2]$, see Section 5. Consider adding a superscript $m$ here as well, that is, replace $\bar{S}f$ by $\bar{S}^m f$.} \textcolor{blue}{Albert: this is notation used in the original manifold scattering paper, so I kept it.}
This motivates \cite{geometric} defining the nonwindowed geometric scattering transform as 
\begin{equation}
\overline{S}f = \{\overline{S}[j_1, \ldots, j_m]f: \, m \geq 0, \quad j_i \in \mathbb{Z}, \quad \forall 1 \leq i \leq m\}.
\end{equation}

Instead of considering $\Lb^1(\mathcal{M})$ norms of $m$-layer propagators, we can instead consider $\Lb^q(\mathcal{M})$ norms of $m$-layer propagators for $q \in (1,2]$, which we define as $m$-layer $q$-nonwindowed geometric scattering coefficients (which are also referred to as scattering moments in other works):
\begin{equation}
\overline{S}_q^m[j_1, \ldots, j_m]f = \|U[j_1, \ldots, j_m]f\|_{\Lb^q(\mathcal{M})} \qquad \forall (j_1, \ldots, j_m) \in \mathbb{Z}^m,
\end{equation} which has seen application in quantum chemistry \cite{QMWavelet1, QMWavelet2, QMWavelet3} and for point cloud data \cite{chew2022manifold}. As shorthand notation, we will use the following notation for one layer coefficients:
\begin{equation}
\overline{S}_q[j]f = \|U[j]f\|_{\Lb^q(\mathcal{M})} \qquad \forall j \in \mathbb{Z}.
\end{equation}

To measure stability, invariance, and equivariance, we define the following norm for  $q$-nonwindowed geometric scattering coefficients:
\begin{equation} \label{eqn: scattering norm}
    \| \overline{S}_q^mf  \|_{\ellb^2 (\Z^m)}^q := \left(\sum_{j_m \in \Z} \ldots \sum_{j_1 \in \Z} | \overline{S}_q^m[j_1, \ldots, j_m]f |^2\right)^{q/2},
\end{equation} which follows the definition in \cite{chua2023generalizations}. Since many of the results in \cite{chua2023generalizations} rely on bounds of Littlewood Paley $g$-functions, we will provide extensions of these results to compact manifolds, with some extra restrictions added.

\section{Generalization of the Littlewood Paley $g$-function}
Let $\mathcal{B}$ be a Banach space, suppose that $g: \mathcal{M} \to \mathcal{B}$, and $x \to \|g(x)\|_\mathcal{B}$ is measurable with respect to the measure induced by the Riemannian volume. Define $\Lb^p_\mathcal{B}(\mathcal{M})$ for $1 \leq p < \infty$ to be
\begin{equation*}
    \|g\|_{\Lb^p_\mathcal{B}(\mathcal{M})}^p =  \int_{\mathcal{M}}\|g(x)\|_\mathcal{B}^p \, d\mu (x) \, .
\end{equation*}
Also, for $1 \leq p < \infty$, define
\begin{equation*}
    \|g\|_{\Lb_\mathcal{B}^{p,\infty}(\mathcal{M})} = \sup_{\delta > 0} \delta \cdot \mu(\{x \in \mathcal{M} : \|g(x)\|_{\mathcal{B}} > \delta\})^{1/p} \,,
\end{equation*} It is clear that
\begin{equation*}
    \|g\|_{\Lb_\mathcal{B}^{p,\infty}(\mathcal{M})} \leq \|g\|_{\Lb^p_{\mathcal{B}}(\mathcal{M})} \, .
\end{equation*} 

In particular, let our Banach space be $\ell^2(\mathbb{Z})$, and consider the operator $\vec{T}f: \mathcal{M} \to \ell^2(\mathbb{Z})$ given by 
$$\vec{T}(f)(x) := \{T_j f(x)\}_{j \in \mathbb{Z}} = \left\{\int_{\mathcal{M}}K_{2^{-j/2}}(x,y)f(y)\, d \mu (y) \right\}_{j \in \mathbb{Z}}.$$
with kernel given by $\vec{K} = \{K_{2^{-j/2}}\}_{j \in \mathbb{Z}}$ associated to the wavelets $\{\psi_{j}\}_{j \in \mathbb{Z}}$ generated using a low pass filter $G \in \mathcal{S}(\mathbb{R}^{+})$ satisfying the conditions satisfying the conditions of Theorem \ref{wavelet frame bounded in l2} for Equation \ref{eq: wavelet definition}. This can be seen as an extension of the Littlewood Paley $g$-function for a compact Riemannian manifold since
$$\|\vec{T}(f)(x)\|_{\ell^2(\mathbb{Z})} = \left(\sum_{j \in \mathbb{Z}} |f \ast \psi_j(x)|^2\right)^{1/2}.$$
% \Yang{I think $K_{2^{-j/2}}$ should be $K_{2^{j/2}}$, then $G(t^2\lambda_k)|_{t= 2^{j/2}} = G(2^j \lambda_k) = \hat{\phi}_j(k)$. This may not matter in any summation in $j$ as $j\in\mathbb{Z}$.} \textcolor{blue}{Albert: That would be correct, but I do think it does not matter.}
% From Theorem \ref{wavelet frame bounded in l2}, we see that \Yang{I'm confused here. If $F\neq G$ in Lemma~\ref{kernel decya condition}, I don't see how Theorem 1 implies this. Theorem 1 is about convolution with $\psi_j$. What is the relation between $\vec{T}$ (or $\vec{K}$) and $\psi_j$?}

% \textcolor{blue}{Albert: $G$ is for the low pass filter. The high pass filter is defined via $F_j(\lambda) = \sqrt{|G(2^{-j-1} \lambda)|^2 - |G(2^{-j} \lambda)|^2}$.}
% \Yang{I didn't find this definition of $F_j(\lambda)$ in the paper.} \textcolor{blue}{Albert: This is because $\psi_j(n) = \sqrt{|\phi_{j-1}(n)|^2 - |\phi_{j}(n)|^2}$ is a spectral filter, so we can find a function $F$ to write it in the form of that kernel. If it's better to make this more clear, I can. The kernel in Lemma 2 is a kernel of the function $F(t^2 \Delta)$ for $F \in \mathcal{S}(\mathbb{R}^{+})$ with $F(0) = 0$. See above for an edit.}
From Theorem \ref{wavelet frame bounded in l2}, 
$$\|\vec{T}f\|_{\Lb^2_{\ell^2(\mathbb{Z})}(\mathcal{M})} \leq \|f \|_{\Lb^2(\mathcal{M})}.$$
Our goal is to extend this operator and prove that for all $q \in (1,2)$, there exists $C_q$ such that 
$$\|\vec{T}f\|_{\Lb^q_{\ell^2(\mathbb{Z})}(\mathcal{M})} \leq C_q\|f \|_{\Lb^q(\mathcal{M})}.$$
Before providing any proofs, we will state preliminary lemmas that will be vital to our approach. The first few lemmas concern the kernel of our convolution operator.

\begin{lemma}[\cite{Geller_Pesenson_2014}]\label{kernel decya condition} Suppose that $F \in \mathcal{S}(\mathbb{R}^{+})$, the space of Schwartz functions restricted to $\mathbb{R}^{+}$ and $F(0) = 0$. For the kernel
$$K_t(x,y) = \sum_{n \in \mathbb{N}} F(t^2 \lambda_n) e_n(x) \overline{e_n(y)},$$
the following pointwise bound holds for some $C_n > 0$ and for all $t > 0$ and all $x,y \in \mathcal{M}$:
$$|K_t(x,y)| \leq \frac{C_n t^{-n}}{\left(1 + \tfrac{r(x,y)}{t}\right)^{n+1}},$$
We remind the reader that $n$ is defined as the dimension of $\mathcal{M}$.
\end{lemma}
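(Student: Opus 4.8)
The plan is to realize $K_t$ as the Schwartz kernel of a spectral multiplier of $\sqrt{-\Delta}$, represent it over the wave group $\cos(s\sqrt{-\Delta})$, and decompose dyadically in the wave--time variable $s$: finite propagation speed confines each dyadic piece to an explicit ball, the rapid decay of $\widehat{\Phi}$ makes the pieces summably small, and the only substantive estimate left is a pointwise kernel bound for a single piece. Write $\lambda_n=\mu_n^2$ with $\mu_n=\sqrt{\lambda_n}\ge0$ and set $\Phi(u):=F(u^2)$; then $\Phi$ is even, Schwartz, and vanishes at $0$, and $K_t(x,y)=\sum_n\Phi(t\mu_n)\,e_n(x)\overline{e_n(y)}$ is the kernel of $\Phi(t\sqrt{-\Delta})$ supplied by the spectral theorem. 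Since $\Phi$ is even, $\widehat{\Phi}$ is even and Schwartz, so Fourier inversion and the functional calculus give
\[
\Phi(t\sqrt{-\Delta})=\frac{1}{2\pi}\int_{\mathbb{R}}t^{-1}\widehat{\Phi}(s/t)\,\cos\!\big(s\sqrt{-\Delta}\big)\,ds .
\]
The range $t\ge1$ is elementary and is where the hypothesis $F(0)=0$ is used: there $t^2\mu_n^2$ is bounded below by a fixed positive constant on the nonzero spectrum, and the rapid decay of $F$ together with the pointwise Weyl bound $\sum_{\lambda_n\le\Lambda}|e_n(x)|^2\lesssim 1+\Lambda^{n/2}$ (uniform in $x$) yield $|K_t(x,y)|\lesssim t^{-n}$, which suffices since $r(x,y)$ is bounded by the diameter of $\mathcal{M}$. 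Henceforth take $0<t\le1$.

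Fix a dyadic partition of unity $1=\sum_{j\ge0}\eta_j$ on $\mathbb{R}$ by even functions, with $\eta_0$ supported in $\{|v|\le2\}$ and $\eta_j$ supported in $\{2^{j-1}\le|v|\le2^{j+1}\}$ for $j\ge1$, and set
\[
T_j:=\frac{1}{2\pi}\int_{\mathbb{R}}t^{-1}\widehat{\Phi}(s/t)\,\eta_j(s/t)\,\cos\!\big(s\sqrt{-\Delta}\big)\,ds ,
\]
so $\Phi(t\sqrt{-\Delta})=\sum_{j\ge0}T_j$. Its defining weight is supported in $\{|s|\le 2^{j+1}t\}$, so finite propagation speed of the wave equation on the compact --- hence complete and boundaryless --- manifold $\mathcal{M}$ forces the kernel of $T_j$ to vanish whenever $r(x,y)>2^{j+1}t$. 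Moreover $T_j$ is the spectral multiplier $m_j(\sqrt{-\Delta})$ with $m_j(\mu)=\Phi^{(j)}(t\mu)$, where $\Phi^{(j)}$ is the function with $\widehat{\Phi^{(j)}}=\widehat{\Phi}\,\eta_j$; as $\widehat{\Phi}$ is Schwartz, $\Phi^{(j)}$ is rapidly decreasing with $|\Phi^{(j)}(w)|\lesssim_{M,k}2^{-jM}(1+|w|)^{-k}$ for all $M,k$, and in particular $\|T_j\|_{\Lb^2(\mathcal{M})\to\Lb^2(\mathcal{M})}\le\|m_j\|_\infty\lesssim_M 2^{-jM}$.

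Next I would bound the kernel of each $T_j$ pointwise by inserting resolvent powers at scale $t$. Fix an integer $N>n/4$ and write $T_j=U_j\circ(1+t^2(-\Delta))^{-N}$ with $U_j:=m_j(\sqrt{-\Delta})\,(1+t^2(-\Delta))^{N}$; since $(1+t^2\mu^2)^N\le(1+t\mu)^{2N}$, the bound on $\Phi^{(j)}$ gives, for every $M$, $|m_j(\mu)(1+t^2\mu^2)^N|\lesssim_M 2^{-jM}(1+t^2\mu^2)^{-M}$. Using that the on--diagonal resolvent estimate $K_{(1+t^2(-\Delta))^{-P}}(x,x)\lesssim t^{-n}$ (valid for $P>n/2$ and $0<t\le1$) follows from the Weyl bound above, one gets $\|K_{U_j}(x,\cdot)\|_{\Lb^2}^2=K_{U_jU_j^\ast}(x,x)\lesssim_M 2^{-2jM}t^{-n}$ and $\|K_{(1+t^2(-\Delta))^{-N}}(\cdot,y)\|_{\Lb^2}^2=K_{(1+t^2(-\Delta))^{-2N}}(y,y)\lesssim t^{-n}$, so Cauchy--Schwarz applied to $K_{T_j}(x,y)=\int_{\mathcal{M}}K_{U_j}(x,z)\,K_{(1+t^2(-\Delta))^{-N}}(z,y)\,d\mu(z)$ gives $|K_{T_j}(x,y)|\lesssim_M 2^{-jM}t^{-n}$. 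Combined with the support restriction from finite propagation, only the indices $j$ with $2^{j+1}t\ge r(x,y)$ contribute to $K_t(x,y)=\sum_j K_{T_j}(x,y)$, and summing the geometric series yields $|K_t(x,y)|\lesssim_M t^{-n}(1+r(x,y)/t)^{-M}$ for every $M$; taking $M=n+1$ is the assertion, with $C_n$ depending on $n$ and on $\mathcal{M}$ through the Weyl estimate and the diameter.

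The one genuinely non--routine step is this pointwise bound on $K_{T_j}$: a naive passage from $\|T_j\|_{\Lb^2\to\Lb^2}$ to an $\Lb^1\!\to\!\Lb^\infty$ (Sobolev) bound would cost a power $t^{-2k}$ with $2k>n$ and ruin the sharp $t^{-n}$ normalization, so one must split off resolvent powers at exactly the scale $t$ and feed in the on--diagonal resolvent/Weyl estimate. Everything else --- convergence of the $s$--integrals in the strong operator topology and legitimacy of the dyadic splitting, finite propagation speed, the uniform pointwise Weyl bound, and boundedness of the diameter --- consists of standard facts about a compact smooth Riemannian manifold without boundary, and these are the only points at which the regularity and compactness hypotheses on $\mathcal{M}$ enter.
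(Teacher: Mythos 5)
Your argument is correct, but note that the paper does not prove this lemma at all: it is imported verbatim as a black-box citation to Geller--Pesenson (and the companion regularity estimate, Lemma \ref{kernel regularity bound}, likewise leans on Theorem 5.5 of \cite{geller2009continuous}). So there is no in-paper proof to match; what you have written is a self-contained derivation along the now-standard wave-kernel route: substitute $\Phi(u)=F(u^2)$, represent $\Phi(t\sqrt{-\Delta})$ over $\cos(s\sqrt{-\Delta})$, localize dyadically in $s$ so that finite propagation speed pins each piece to the ball $r(x,y)\le 2^{j+1}t$, and control each piece's kernel sup-norm by $2^{-jM}t^{-n}$ via the factorization through $(1+t^2(-\Delta))^{-N}$ and the on-diagonal bound coming from the uniform local Weyl law. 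I checked the quantitative details and they close: the exponents $M,N>n/4$ make the on-diagonal sums converge to $O(t^{-n})$, the geometric sum over $2^{j+1}t\ge r(x,y)$ produces exactly $(1+r(x,y)/t)^{-M}$, and your observation that $F(0)=0$ is only needed in the regime $t\ge 1$ is right (a Gaussian $F$ with $F(0)=1$ still satisfies the small-$t$ bound, but its kernel tends to $\mathrm{vol}(\mathcal{M})^{-1}$ as $t\to\infty$, which is why the hypothesis cannot be dropped for large $t$). Two points deserve explicit citations rather than the label ``standard'': the uniform pointwise Weyl bound $\sum_{\lambda_k\le\Lambda}|e_k(x)|^2\lesssim 1+\Lambda^{n/2}$ is H\"ormander's local Weyl law and is the only genuinely deep input, and the identity $K_{U_jU_j^{\ast}}(x,x)=\|K_{U_j}(x,\cdot)\|_{\Lb^2(\mathcal{M})}^2$ should be justified by the absolute, uniform convergence of the relevant spectral sums (which your rapid decay in $\mu$ supplies). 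Compared with the cited source, which obtains the bound through a pseudodifferential/parametrix analysis of $F(t^2\Delta)$, your approach trades symbol calculus for finite propagation speed and is arguably more elementary and more portable (it works verbatim on any complete manifold with a doubling measure and a local Weyl-type bound), at the cost of only giving the estimate for Schwartz multipliers rather than finite-regularity ones.
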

% \Yang{I think $F$ should be $G$, but doesn't this contradict the assumption in Theorem 1 that $G(0)=C>0$?}.

% \textcolor{blue}{Albert: See below for more explanation. That is a notation issue. The notation $G$ was used for a low pass filter already, and we use differences of $G$ to generate the wavelets. We should use $F$ here instead.}

Let $\mathcal{U}$ be an open cover for $\mathcal{M}$. A number $\gamma > 0$ is called a Lebesgue number for $\mathcal{U}$ if and only if for all $x \in \mathcal{M}$, there exists $\mathcal{U}_x \in \mathcal{U}$ such that $B(x, \gamma) \subset \mathcal{U}_x$, where we define
\begin{equation}
B(x,r_0) := \{ y \in X \, : \, r(x,y) < r_0\}.
\end{equation}

\begin{lemma}[\cite{geller2009continuous}] \label{manifold regularity}Cover $\mathcal{M}$ with a finite collection of open sets $P_i$ with $1 \leq i \leq I$ such that the following properties hold for each index $i$:
\begin{enumerate}
\item there exists a chart $(V_i, \varphi_i)$ with $\overline{P}_i \subset V_i$
\item $\varphi_i(P_i)$ is a ball in $\mathbb{R}^n$.
\end{enumerate} 

Choose $\delta >0$ so that $3 \delta$ is a Lebesgue number for the covering $\{P_i\}$. Then there exist $c_1, c_2 > 0$ such that for any $x \in \mathcal{M}$ and any $B(x, 3 \delta) \subset P_i$, the following statements hold in the coordinate system on $P_i$ obtained from $\varphi_i$:
\begin{enumerate}
\item For all $y, z \in P_i$, we have $r(y,z) \leq c_2|y-z|$.
\item For all $y, z \in B(x,\delta)$, we have $r(y,z) \geq c_1|y-z|$.
\end{enumerate}
\end{lemma}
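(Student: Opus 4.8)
The plan is to reduce everything to a uniform two-sided comparison between the Riemannian and Euclidean metrics inside each coordinate patch $\varphi_i$, then to compare lengths of curves; the Lebesgue-number hypothesis is used only to force near-minimizing curves to remain inside a single patch. Identify each point of $P_i$ with its coordinate image under $\varphi_i$. On the compact set $\overline{P}_i\subset V_i$ the metric tensor is a continuous, symmetric, positive-definite matrix field, so there are constants $0<\lambda_i\le\Lambda_i<\infty$ with $\lambda_i|v|^2\le v^{\top} g^{(i)}(p)v\le\Lambda_i|v|^2$ for all $p\in\varphi_i(\overline{P}_i)$ and $v\in\mathbb{R}^n$; consequently any piecewise-$C^1$ curve $c$ contained in $P_i$ satisfies $\sqrt{\lambda_i}\,\ell_{\mathrm{eucl}}(c)\le\ell_{\mathcal M}(c)\le\sqrt{\Lambda_i}\,\ell_{\mathrm{eucl}}(c)$, where $\ell_{\mathcal M}$ and $\ell_{\mathrm{eucl}}$ denote Riemannian and Euclidean length. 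Set $D_i:=\operatorname{diam}\varphi_i(\overline{P}_i)<\infty$, $c_2:=\max_{1\le i\le I}\sqrt{\Lambda_i}$, and $c_1:=\min_{1\le i\le I}\min\{\sqrt{\lambda_i},\,\delta/D_i\}$; note $c_1>0$ since $I<\infty$. For statement (1): given $y,z\in P_i$, convexity of the ball $\varphi_i(P_i)$ makes the Euclidean segment from $y$ to $z$ a curve inside $P_i$ of Euclidean length $|y-z|$, so $r(y,z)\le\sqrt{\Lambda_i}|y-z|\le c_2|y-z|$.

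For statement (2) --- the only delicate point --- fix $x$ with $B(x,3\delta)\subset P_i$ and $y,z\in B(x,\delta)$, and let $c$ be any piecewise-$C^1$ curve in $\mathcal M$ from $y$ to $z$ with $\ell_{\mathcal M}(c)\le r(y,z)+\varepsilon$. If $c$ stays inside $B(x,2\delta)\subset B(x,3\delta)\subset P_i$, then $\ell_{\mathcal M}(c)\ge\sqrt{\lambda_i}\,\ell_{\mathrm{eucl}}(c)\ge\sqrt{\lambda_i}|y-z|\ge c_1|y-z|$, using that a curve's Euclidean length is at least the Euclidean distance between its endpoints. Otherwise $c$ passes through a point $w$ with $r(x,w)\ge 2\delta$; truncating $c$ at $w$ and using the triangle inequality gives $\ell_{\mathcal M}(c)\ge r(y,w)\ge r(x,w)-r(x,y)\ge 2\delta-\delta=\delta\ge(\delta/D_i)|y-z|\ge c_1|y-z|$, since $|y-z|\le D_i$. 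Either way $r(y,z)+\varepsilon\ge c_1|y-z|$, and letting $\varepsilon\downarrow 0$ yields $r(y,z)\ge c_1|y-z|$.

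The main obstacle is precisely the second case above: a near-length-minimizing curve from $y$ to $z$ may leave the chart on which the comparison constants $\lambda_i,\Lambda_i$ are valid, and the nesting $B(x,\delta)\subset B(x,2\delta)\subset B(x,3\delta)\subset P_i$ furnished by the Lebesgue-number hypothesis is exactly what lets the dichotomy dispose of it. Compactness of $\mathcal M$ enters in the background as well: it guarantees both that the finite cover with the stated chart properties and a strictly positive Lebesgue number exist, and that $\mathcal M$ is complete, so that $r$ is a genuine metric realized up to $\varepsilon$ by rectifiable curves.
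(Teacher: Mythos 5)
Your proof is correct, and its core is the same as the paper's: identify $\overline{P}_i$ with its coordinate image, bound the metric tensor above and below by its extreme eigenvalues uniformly over the compact set $\overline{P}_i$, and get statement (1) by measuring the straight coordinate segment (which lies in $P_i$ by convexity of the ball $\varphi_i(P_i)$). The one place you genuinely diverge is statement (2), in how you handle the possibility that a near-minimizing curve leaves the chart. The paper shows this simply cannot happen: if $\gamma_k$ are curves from $y$ to $z$ with $\ell(\gamma_k)\to r(y,z)$ and $y,z\in B(x,\delta)$, then for large $k$ every point $\gamma_k(t)$ satisfies $r(\gamma_k(t),x)\le r(\gamma_k(t),y)+r(y,x)\le r(z,y)+\delta\le 3\delta$, so $\gamma_k\subset P_i$ and the eigenvalue comparison applies directly, giving $c_1=\min_i\bigl(\min_{\overline{P}_i}\underline{\Lambda}\bigr)^{1/2}$. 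You instead run a dichotomy (curve stays in $B(x,2\delta)$ versus curve reaches distance $2\delta$ from $x$) and absorb the second case with the crude bound $\ell(c)\ge\delta\ge(\delta/D_i)\lvert y-z\rvert$, which forces you to shrink $c_1$ to $\min_i\min\{\sqrt{\lambda_i},\,\delta/D_i\}$. Both arguments are valid; the paper's route buys a $c_1$ that depends only on the eigenvalues of $g$ on the patches, which matters downstream because the authors explicitly exploit the continuous dependence of $c_1,c_2$ on the metric and impose the quantitative condition $c_1c_2<2$, whereas your $c_1$ additionally depends on $\delta$ and the chart diameters and is typically smaller. Your dichotomy is somewhat more robust in that it never needs the observation that short curves stay near their endpoints, but that observation is exactly the cleaner way the Lebesgue-number hypothesis is meant to be used here. (Your closing remark about completeness is harmless but unnecessary: $r(y,z)$ is by definition the infimum of lengths of piecewise $C^1$ curves, so near-minimizers exist without invoking completeness.)
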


\begin{proof}
This proof follows the idea for \cite[Proposition 3.1]{geller2009continuous} yet with explicitly identified constants $c_1,c_2$ to demonstrate their continuous dependence on the metric $g$.
Using the coordinates given by $\varphi_i$, we identify $g(x)$ with an $n\times n$ smooth matrix for each $x\in\overline{P}_i$. We write $|\cdot|_g$ for the norm induced by $g$, and $|\cdot|$ for the Euclidean norm in the $\varphi_i$-coordinates.

For each $x\in\overline{P}_i$ and a tangent vector $v_x\in T_x\mathcal{M}$, we have
$$
\underline{\Lambda}(x) |v_x|^2 \leq  |v_x|^2_{g(x)} = \sum^n_{\alpha,\beta=1} v_x^\alpha g_{\alpha\beta}(x) v_x^\beta \leq \overline{\Lambda}(x) |v_x|^2.
$$
Here $\underline{\Lambda}(x), \overline{\Lambda}(x)$ denote the minimum and maximum eigenvalues of the matrix $g(x)$ respectively. We conclude
$$
\left( \min_{x\in\overline{P}_i} \underline{\Lambda}(x) \right) |v_x|^2 \leq  |v_x|^2_{g(x)} \leq \left( \max_{x\in\overline{P}_i} \overline{\Lambda}(x) \right) |v_x|^2
$$
uniformly for $(x,v)\in T\overline{P}_i$. As both $\underline{\Lambda}(x)$ and $\overline{\Lambda}(x)$ are positive continuous functions of $x$ on the compact subset $\overline{P}_i$, their minimum and maximum are strictly positive.

For the first statement, take $y,z\in P_i$ and define $\gamma:[0,1]\rightarrow P_i$, $\gamma(t):=tz+(1-t)y$ where the coordinates of $y,z$ are given by $\varphi_i$. Then $\gamma$ is a smooth curve connecting $y,z$ and $|\gamma'(t)| = |y-z|$ for all $t$. By the definition of the Riemannian distance $r(y,z)$, we have
$$
    r(y,z) \leq \text{ length of } \gamma = \int^1_0 |\gamma'(t)|_{g(\gamma(t))} \, dt 
    \leq \left( \max_{x\in\overline{P}_i} \overline{\Lambda}(x) \right)^{\frac{1}{2}} \int^1_0 |\gamma'(t)| \, dt 
    = \left( \max_{x\in\overline{P}_i} \overline{\Lambda}(x) \right)^{\frac{1}{2}} |y-z|.
$$
One choice for $c_2$ is $c_2:= \max_{i=1,\dots,I}\left( \max_{x\in\overline{P}_i} \overline{\Lambda}(x) \right)^{\frac{1}{2}}$.

For the second statement, take $y, z \in B(x,\delta)$ and let $\gamma_k: [0,1] \rightarrow \mathcal{M}$ be a sequence of piecewise $C^1$ curves connecting $y,z$ such that their lengths $\ell(\gamma_k)\rightarrow r(y,z)$ as $k\rightarrow\infty$. For large $k$, we have 
$$
r(\gamma_k(t),x) \leq r(\gamma_k(t),y) + r(y,x) \leq r(z,y) + \delta \leq r(z,x) + r(x,y) + \delta \leq 3\delta
$$
for all $t\in [0,1]$, hence $\gamma_k\subset P_i$. Therefore, we have for large $k$ that
$$
\ell(\gamma_k) = \int^1_0 |\gamma'_k(t)|_g \, dt
\geq \left( \min_{x\in\overline{P}_i} \underline{\Lambda}(x) \right)^{\frac{1}{2}} \int^1_0 |\gamma'_k(t)| \,dt
\geq  \left( \min_{x\in\overline{P}_i} \underline{\Lambda}(x) \right)^{\frac{1}{2}} \left| \int^1_0 \gamma'_k(t) \,dt 
\right|
\geq \left( \min_{x\in\overline{P}_i} \underline{\Lambda}(x) \right)^{\frac{1}{2}} |y-z|.
$$
Let $k\rightarrow\infty$ proves the statement, and one choice for $c_1$ is $c_1:=\min_{i=1,\dots,I} \left( \min_{x\in\overline{P}_i} \underline{\Lambda}(x) \right)^{\frac{1}{2}}$.

\end{proof}

For the rest of this paper, we fix collections $\{P_i\}$, $\{V_i\}$, $\{\varphi_i\}$, and constants $\delta,c_1,c_2$ from the previous Lemma.

\begin{lemma} \label{kernel regularity bound} Suppose that $r(y,z) < \min\left\{\tfrac{1}{2}r(x,y), \delta\right\}$ so that $y$ and $z$ lie on the same ball of the covering. Assume that there exist $c_1$ and $c_2$ from Lemma \ref{manifold regularity} with $c_1c_2 < 2.$ Then there exists a constant $C_\delta$ such that
$$|K_t(x,y)-K_t(x,z)| \leq C_\delta \frac{r(y,z) t^{-n-1}}{\left(1+ \frac{r(x,y)}{t}\right)^{n+1}}.$$
\end{lemma}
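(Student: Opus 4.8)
\medskip
\noindent\emph{Proof plan.}\quad The plan is to express $K_t(x,y)-K_t(x,z)$ as the integral of a first derivative of $K_t$ along a short path from $y$ to $z$, and to control that derivative by a pointwise bound of the same shape as Lemma~\ref{kernel decya condition} with one extra power of $t^{-1}$. Let $\gamma:[0,1]\to\mathcal M$ be a minimizing geodesic from $y$ to $z$ (which exists by completeness of the compact manifold $\mathcal M$), parametrized proportionally to arclength, so that $|\gamma'(s)|_{g}=r(y,z)$ and $r(y,\gamma(s))\le r(y,z)$ for all $s$. Interchanging the (rapidly convergent) spectral sum with the $s$-integral,
\begin{equation*}
K_t(x,y)-K_t(x,z)=-\int_0^1\big\langle\,(\nabla_y K_t)(x,\gamma(s)),\ \gamma'(s)\,\big\rangle_{g}\,ds ,
\end{equation*}
where $(\nabla_y K_t)(x,w):=\sum_{n}F(t^2\lambda_n)\,e_n(x)\,\overline{\nabla e_n(w)}$ is the kernel differentiated in its second slot. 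Hence $|K_t(x,y)-K_t(x,z)|\le r(y,z)\sup_{s\in[0,1]}|(\nabla_y K_t)(x,\gamma(s))|_{g}$, which already exposes the factor $r(y,z)$; it remains to bound the Riemannian gradient of the kernel.

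\medskip
\noindent The analytic core is the estimate
\begin{equation*}
|(\nabla_y K_t)(x,w)|_{g}\ \le\ \frac{C\,t^{-n-1}}{\big(1+\tfrac{r(x,w)}{t}\big)^{n+1}},\qquad t>0,\ x,w\in\mathcal M ,
\end{equation*}
which I would obtain by pushing the argument behind Lemma~\ref{kernel decya condition} one derivative further. Writing $F(t^2\lambda)=\widetilde F(t\sqrt\lambda)$ with $\widetilde F\in\mathcal S(\mathbb R^{+})$ (extended evenly), $K_t$ is the kernel of $\widetilde F(t\sqrt{-\Delta})=\tfrac1{2\pi t}\int\widehat{\widetilde F}(\tau/t)\cos(\tau\sqrt{-\Delta})\,d\tau$; finite propagation speed confines the kernel of $\cos(\tau\sqrt{-\Delta})$ and of its spatial derivatives to $\{r(x,\cdot)\le|\tau|\}$, so the rapid decay of $\widehat{\widetilde F}$ forces the $(1+r/t)^{-(n+1)}$ off-diagonal decay, while each spatial derivative costs exactly one factor $t^{-1}$. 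The bound is then made uniform across the finite atlas $\{(V_i,\varphi_i)\}$, and the Riemannian gradient is compared with the coordinate gradient using Lemma~\ref{manifold regularity}, the hypothesis $c_1c_2<2$ serving to keep all such conversion constants under control. I expect this derivative estimate to be the main obstacle; if it is not directly quotable from \cite{Geller_Pesenson_2014, geller2009continuous}, it has to be re-derived by the same almost-orthogonality / Littlewood--Paley scheme used for Lemma~\ref{kernel decya condition}. Everything after it is geometric bookkeeping.

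\medskip
\noindent Granting the gradient bound, the proof closes with a triangle inequality along $\gamma$: since $r(y,z)<\tfrac12 r(x,y)$ and $r(y,\gamma(s))\le r(y,z)$, we get $r(x,\gamma(s))\ge r(x,y)-r(y,\gamma(s))\ge\tfrac12 r(x,y)$, hence $1+\tfrac{r(x,\gamma(s))}{t}\ge\tfrac12\big(1+\tfrac{r(x,y)}{t}\big)$. Substituting this into the gradient bound and then into the displayed integral yields
\begin{equation*}
|K_t(x,y)-K_t(x,z)|\ \le\ 2^{\,n+1}C\,\frac{r(y,z)\,t^{-n-1}}{\big(1+\tfrac{r(x,y)}{t}\big)^{n+1}},
\end{equation*}
which is the assertion with $C_\delta=2^{\,n+1}C$, the constant $C$ absorbing the atlas- and metric-dependent factors. (If one prefers to work entirely inside a chart $\varphi_i$, which is presumably where the hypothesis $c_1c_2<2$ is genuinely needed, one uses that $r(y,z)<\delta$ and $3\delta$ is a Lebesgue number to place $y,z$ in a single $P_i$, replaces $\gamma$ by the coordinate segment $s\mapsto\varphi_i^{-1}\big((1-s)\varphi_i(y)+s\varphi_i(z)\big)$, bounds $|\varphi_i(y)-\varphi_i(z)|\le c_1^{-1}r(y,z)$ by Lemma~\ref{manifold regularity}(2) and $r(y,\gamma(s))\le (c_2/c_1)\,r(y,z)$ by Lemma~\ref{manifold regularity}(1), and then $c_1c_2<2$ is what keeps the segment in the region where $r(x,\cdot)$ is comparable to $r(x,y)$; the rest of the argument is unchanged.)
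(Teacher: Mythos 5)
Your proposal is correct and follows the same skeleton as the paper's proof: a mean-value/fundamental-theorem-of-calculus estimate for the kernel in its second slot, carrying one extra power of $t^{-1}$ but the same off-diagonal decay, followed by a triangle-inequality comparison to replace the intermediate point by $y$. The step you flag as the ``main obstacle''---the gradient bound $|(\nabla_y K_t)(x,w)|\le C\,t^{-n-1}(1+r(x,w)/t)^{-n-1}$---is not a gap: the paper quotes exactly this, packaged as a mean-value statement, from Theorem~5.5 of \cite{geller2009continuous}, which asserts the existence of a point $w_x$ on the coordinate segment from $y$ to $z$ with $|K_t(x,y)-K_t(x,z)|\le C_1\,r(y,z)\,t^{-n-1}(1+r(x,w_x)/t)^{-n-1}$; your finite-propagation-speed derivation is a legitimate alternative but unnecessary. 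The one genuine difference is worth noting: because the cited theorem places $w_x$ on the \emph{coordinate} segment, the paper must use Lemma~\ref{manifold regularity} to bound $r(y,w_x)$ by a multiple of $r(y,z)$, and this is where the hypothesis on $c_1c_2$ enters (your bookkeeping $r(y,w_x)\le (c_2/c_1)\,r(y,z)$ is, incidentally, the correct reading of Lemma~\ref{manifold regularity}; the paper's displayed chain $c_2|y-z|\le c_1c_2\,r(y,z)$ uses the inequality $r(y,z)\ge c_1|y-z|$ in the wrong direction unless $c_1\le 1$ is normalized). Your primary route along a minimizing geodesic gives $r(y,\gamma(s))\le r(y,z)<\tfrac12 r(x,y)$ directly, so the comparison $r(x,\gamma(s))\ge\tfrac12 r(x,y)$ needs no chart constants at all and the hypothesis $c_1c_2<2$ becomes superfluous---a cleaner statement, at the cost of having to justify the gradient bound intrinsically rather than quoting the coordinate-segment version ready-made.
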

\begin{proof} Using the proof of Theorem 5.5 in \cite{geller2009continuous}, for each $x \in \mathcal{M}$, there exists a point $w_x$ on the segment connecting $y$ to $z$ such that
$$|K_t(x,y)-K_t(x,z)| \leq C_1 \frac{r(y,z) t^{-n-1}}{\left(1+ \frac{r(x,w_x)}{t}\right)^{n+1}}.$$
Now notice that triangle inequality implies that
$$r(x,y) \leq r(x,w_x) + r(y,w_x).$$
By Lemma \ref{manifold regularity}, since $w_x$ lies on the line segment between $y$ and $z$, we see that 
 $$r(y,w_x) \leq c_2|y-w_x| \leq c_2|y-z| \leq c_1c_2r(y,z).$$
%  \Yang{I don't see how the assumption implies $y,z\in B(x,\delta)$ so that the inequality $r(y,z)\geq c_1|y-z|$ can be used.} \textcolor{blue}{Albert: we assume  $r(y,z) < \min\left\{\tfrac{1}{2}r(x,y), \delta\right\}$, so $w_x \in B(y, 3\delta)$. Based on the statement of the theorem, the constants $c_1$ and $c_2$ work as long as we are in the same chart (and independently of the point $x$), which holds because $3\delta$ is a Lebesgue number for the covering.}
% \Yang{So the statement of Lemma 3 actually does not need the point $x$. Item 2 can be replaced by something like ``if $y,z$ are such that $r(y,z)<\delta$, then $r(y,z)\geq c_1|y-z|$''}
%  \textcolor{blue}{Albert: I'm not sure if this is relevant because my brain is also not working well. Based on the original statement of Lemma $3$ in the reference, for any $x_0$, we can choose a ball surrounding $x_0$ that is contained inside a chart and bullet point 2 holds. I believe your statement holds, but I would check the original paper to make sure. The point $x$ mentioned in the theorem seems to be a variable to make it feasible to write a ball of radius $3\delta$ easily using notation.}
%\Yang{The roles of $c_1$ and $c_2$ are switched compared to the assumption.} \textcolor{blue}{Albert: That would be correct.}
It follows that
$$r(x,y) - c_1c_2r(y,z)\leq r(x,w_x).$$
Since $c_1c_2 < 2$ and $r(x,y) \geq 2 r(y,z)$, we see that
$$r(x,y) - \frac{c_1c_2}{2} r(x,y) \leq r(x,w_x)$$
so that $1 - \tfrac{c_1c_2}{2} > 0$. This leads to
$r(x,y) \leq C_r r(x,w_x)$ for some constant $C_r$ independent of $x$. Finally, we have 
$$|K_t(x,y)-K_t(x,z)| \leq C_\delta \frac{r(y,z) t^{-n-1}}{\left(1+ \frac{r(x,y)}{t}\right)^{n+1}}.$$
\end{proof}

Now we provide the necessary tools from classical harmonic analysis for extension. The idea is similar to the proof in the Euclidean case; we wish to prove a weak-type $(1,1)$ bound and extend by interpolation. Let $X$ be a set, $\beta$ a quasimetric, and $\mu$ a measure on $X$ such that 
$0 < \mu(B(x,r)) < \infty$ for all $x \in X$ and $r > 0$. We say that a space $(X, \beta, \mu)$ is of homogeneous type, though often with the quasi-metric and measure omitted when implied, if for all $x \in X$ and $r > 0$ there exists a constant $C_D$ such that
\begin{equation}
\mu(B(x,2r)) \leq C_D \mu(B(x,r)),
\end{equation}
where $B(x,r)$ is a ball of radius $r$ centered at $x$ for $(X, \beta).$ The property above is also known as the doubling property. It is well known that a $C^\infty$ compact Riemannian manifold using the standard Riemannian metric and volume is of homogeneous type. 

The first result we will need is a Calderon-Zygmund decomposition:

\begin{theorem}[\cite{Coifman1971AnalyseHN}] Suppose that $X$ is a space of homogeneous type. Suppose that $f \in \Lb^1(X)$ and choose $\alpha > 0$ such that $\alpha^{-1}\|f\|_1 < \mu(X)$. Then we can decompose $f := g+b$ such that
\begin{align*}
\|g\|_{\Lb^2(\mathcal{M})}^2 &\leq C_1 \alpha \|f\|_{\Lb^1(\mathcal{M})}, \\
b &= \sum_{i} b_i,
\end{align*} where $C_1>0$ is a constant, $\text{supp}(b_i) \subset B(x_i, r_i)$ for some countable collection of balls $\{B(x_i, r_i)\}$, and each $b_i$ satisfies 
\begin{align*}
\int_{X} b_i(x) d\mu(x) &= 0, \\
\|b_i\|_1 &\leq C \alpha \mu(B(x_i, r_i)), \\
\sum_{i} \mu(B(x_i, r_i)) &\leq C \alpha^{-1}\|f\|_{\Lb^1(\mathcal{M})}.
\end{align*}
\end{theorem}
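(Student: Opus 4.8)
The plan is to adapt the classical Calderón–Zygmund stopping-time argument, using the doubling property of $\mu$ as the only structural input. The key geometric ingredient I would invoke is a system of \emph{dyadic cubes} on a space of homogeneous type (the dyadic cube construction of Christ): there is a collection $\{Q^k_\tau\}$, indexed by scales $k\in\Z$ and a countable set of $\tau$'s, such that for each fixed $k$ the cubes $\{Q^k_\tau\}_\tau$ partition $X$ up to a $\mu$-null set, the family is nested across scales, and each $Q^k_\tau$ sits between two balls of comparable radii $\sim\delta^k$ about a common center, the comparability constants depending only on $C_D$. (A Whitney covering of the super-level set of the Hardy–Littlewood maximal function, with the bounded-overlap property that follows from doubling, could be used instead; the bookkeeping below is unchanged.) Two standard consequences of doubling that I will use freely are that each dyadic parent satisfies $\mu(\widehat Q)\le C\mu(Q)$, and that the dyadic maximal operator is of weak type $(1,1)$, whence a dyadic Lebesgue differentiation theorem holds on $X$.

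With the grid fixed, I would run the stopping-time selection at height $\alpha$: call a cube $Q$ \emph{bad} if $\mu(Q)^{-1}\int_Q|f|\,d\mu>\alpha$, and let $\{Q_j\}$ be the maximal bad cubes. By maximality they are pairwise disjoint, and the hypothesis $\alpha^{-1}\|f\|_1<\mu(X)$ makes the procedure well posed in both the finite- and infinite-measure cases: at the coarsest scales the average of $|f|$ tends to $0$ (if $\mu(X)=\infty$), respectively is already $<\alpha$ over $X$ itself (if $\mu(X)<\infty$), so no ``top'' cube is selected. For each $Q_j$, the parent $\widehat{Q_j}$ is not bad, so $\int_{\widehat{Q_j}}|f|\le\alpha\,\mu(\widehat{Q_j})$, and combining with $\mu(\widehat{Q_j})\le C\mu(Q_j)$ yields the two-sided estimate
$$\alpha<\frac{1}{\mu(Q_j)}\int_{Q_j}|f|\,d\mu\le C\alpha.$$

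Now set $\Omega:=\bigcup_jQ_j$, and define $g:=f$ on $X\setminus\Omega$, $g:=\mu(Q_j)^{-1}\int_{Q_j}f\,d\mu$ on each $Q_j$, and $b_j:=\big(f-\mu(Q_j)^{-1}\int_{Q_j}f\,d\mu\big)\chi_{Q_j}$, $b:=\sum_jb_j$, so that $f=g+b$. Then $\int_Xb_j\,d\mu=0$ and $\supp(b_j)\subset Q_j$ are immediate; taking $B(x_j,r_j)$ to be the circumscribed ball of $Q_j$ gives $\supp(b_j)\subset B(x_j,r_j)$ with $\mu(B(x_j,r_j))\le C\mu(Q_j)$ by doubling, hence $\|b_j\|_1\le2\int_{Q_j}|f|\le2C\alpha\,\mu(Q_j)\le C'\alpha\,\mu(B(x_j,r_j))$ from the upper stopping bound, and $\sum_j\mu(B(x_j,r_j))\le C\sum_j\mu(Q_j)\le C\alpha^{-1}\sum_j\int_{Q_j}|f|\le C\alpha^{-1}\|f\|_1$ since the $Q_j$ are disjoint and bad. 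For $g$: if $x\notin\Omega$ then every dyadic cube containing $x$ is non-bad, so by dyadic Lebesgue differentiation $|f(x)|\le\alpha$ for a.e.\ such $x$, while on each $Q_j$ the defining average is $\le C\alpha$; thus $\|g\|_\infty\le C\alpha$. Since also $\|g\|_1\le\int_{X\setminus\Omega}|f|+\sum_j\int_{Q_j}|f|=\|f\|_1$, we conclude $\|g\|_2^2\le\|g\|_\infty\|g\|_1\le C_1\alpha\|f\|_1$, the last required bound.

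The main obstacle is not the stopping-time bookkeeping, which is routine once the estimates above are assembled, but packaging the underlying geometric machinery cleanly: verifying, or citing in a self-contained way, a dyadic-cube structure (or a bounded-overlap Whitney covering) adapted to a general doubling measure, together with the weak-$(1,1)$ maximal bound and the Lebesgue differentiation theorem that it supports. A secondary subtlety is the degenerate case $\mu(X)<\infty$, where one must confirm that the stopping procedure launched from the coarsest scales does not collapse — and this is exactly what the hypothesis $\alpha^{-1}\|f\|_1<\mu(X)$ guarantees, since it forces the average of $|f|$ over $X$ to lie strictly below $\alpha$.
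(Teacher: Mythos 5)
Your argument is correct, but note that the paper does not actually prove this statement: it is quoted verbatim from Coifman--Weiss \cite{Coifman1971AnalyseHN} as a black box, so there is no internal proof to compare against. What you have written is a legitimate reconstruction via Christ's dyadic cubes: the stopping-time selection of maximal cubes with $\mu(Q)^{-1}\int_Q|f|>\alpha$, the two-sided bound $\alpha<\mu(Q_j)^{-1}\int_{Q_j}|f|\le C\alpha$ from the non-bad parent plus doubling, the standard definitions of $g$ and $b_j$, and the $\Lb^2$ bound on $g$ via $\|g\|_2^2\le\|g\|_\infty\|g\|_1$ all check out, as does your use of the hypothesis $\alpha^{-1}\|f\|_1<\mu(X)$ to guarantee that the whole space is not itself a bad cube (the relevant case here, since $\mathcal{M}$ is compact). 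The original Coifman--Weiss route differs in its covering machinery: it decomposes the open set $\{Mf>\alpha\}$ for the (uncentered) Hardy--Littlewood maximal function by a Whitney/Vitali-type covering with bounded overlap, rather than through a dyadic grid, and then corrects for the overlap with a partition of unity when defining the $b_i$; your dyadic version avoids the overlap bookkeeping entirely at the cost of invoking the (nontrivial, and historically later) existence of a dyadic structure on a space of homogeneous type. Either route is acceptable, and you correctly identify that the only real content left to discharge is citing the dyadic-cube (or Whitney-covering) construction and the associated Lebesgue differentiation theorem for doubling measures; those are standard and do hold at this generality, so there is no gap.
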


\begin{lemma} Suppose that we choose wavelets $\{\psi_{j}\}_{j \in \mathbb{Z}}$ generated using $G \in \mathcal{S}(\mathbb{R}^{+})$ in Equation \ref{eq: wavelet definition} that satisfy the conditions of Theorem \ref{wavelet frame bounded in l2} and $c_1 c_2 < 2$ in Lemma \ref{manifold regularity}. The following weak $(1,1)$ bound holds for some $A > 0$:
$$\|\vec{T}f\|_{\Lb^{1,\infty}_{\ell^2(\mathbb{Z})}(\mathcal{M})} \leq  A\|f\|_{\Lb^1(\mathcal{M})}.$$
\end{lemma}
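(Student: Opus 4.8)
\emph{Proof plan.} This is a weak-type $(1,1)$ bound for the $\ell^2(\mathbb{Z})$-valued singular integral $\vec{T}$, and I would establish it by the classical Calder\'on--Zygmund argument, exactly as one proves weak boundedness of the Littlewood--Paley $g$-function: combine the $\Lb^2$ bound of Theorem~\ref{wavelet frame bounded in l2}, the Calder\'on--Zygmund decomposition stated above, and a vector-valued H\"ormander (kernel-smoothness) estimate for $\vec{K}$ built from Lemmas~\ref{kernel decya condition} and~\ref{kernel regularity bound}. Write $\alpha$ for the level at which the quasi-norm is tested; it suffices to bound $\alpha\,\mu(\{x:\|\vec{T}f(x)\|_{\ell^2(\mathbb{Z})}>\alpha\})$ by a fixed multiple of $\|f\|_1$, uniformly in $\alpha>0$. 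For $\alpha$ below a fixed multiple of $\|f\|_1/\mu(\mathcal{M})$ this is immediate because $\mu(\{\cdots>\alpha\})\leq\mu(\mathcal{M})<\infty$. For larger $\alpha$ I would first note that every ball $B(x_i,r_i)$ appearing in the Calder\'on--Zygmund decomposition of $f$ at height $\alpha$ has $r_i<\delta/4$, with $\delta$ the manifold constant from Lemma~\ref{manifold regularity}: each such ball has measure at least $\inf_{x\in\mathcal{M}}\mu(B(x,\delta/4))>0$ (positive by compactness), whereas $\sum_i\mu(B(x_i,r_i))\lesssim\alpha^{-1}\|f\|_1$, so once $\alpha$ is a large enough multiple of $\|f\|_1$ no ball of radius $\geq\delta/4$ can occur. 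This smallness is precisely what makes Lemma~\ref{kernel regularity bound} applicable, and all that follows is carried out for such $\alpha$.

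Decompose $f=g+b$ with $b=\sum_i b_i$. For the good part, $\|g\|_2^2\lesssim\alpha\|f\|_1$, so Theorem~\ref{wavelet frame bounded in l2} gives $\|\vec{T}g\|_{\Lb^2_{\ell^2(\mathbb{Z})}(\mathcal{M})}^2\lesssim\alpha\|f\|_1$, and Chebyshev's inequality yields $\mu(\{\|\vec{T}g\|_{\ell^2(\mathbb{Z})}>\alpha/2\})\lesssim\alpha^{-1}\|f\|_1$. For the bad part, put $E:=\bigcup_i B(x_i,4r_i)$; since $\mathcal{M}$ is of homogeneous type, the doubling property together with $\sum_i\mu(B(x_i,r_i))\lesssim\alpha^{-1}\|f\|_1$ gives $\mu(E)\lesssim\alpha^{-1}\|f\|_1$. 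It therefore remains to control $\vec{T}b$ on $\mathcal{M}\setminus E$.

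The core of the proof is the vector-valued H\"ormander condition: for every $i$ and every $y\in B(x_i,r_i)$,
\[ \int_{\mathcal{M}\setminus B(x_i,4r_i)}\bigl\|\vec{K}(x,y)-\vec{K}(x,x_i)\bigr\|_{\ell^2(\mathbb{Z})}\,d\mu(x)\ \lesssim\ 1. \]
To prove it, fix $x$ with $R:=r(x,x_i)\geq 4r_i$ and $y\in B(x_i,r_i)$, so $r(x,y)\approx R$, $r(y,x_i)<r_i<\tfrac{1}{4}R$, and $r_i<\delta/4$; then at every scale $t\in\{2^{-j/2}:j\in\mathbb{Z}\}$ both the pointwise bound of Lemma~\ref{kernel decya condition} (applied to $K_t(x,y)$ and $K_t(x,x_i)$ separately) and the regularity bound of Lemma~\ref{kernel regularity bound} (applied to the pair $y,x_i$) are available for the kernels $K_{2^{-j/2}}$. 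Taking at each scale the smaller of twice the size bound and the regularity bound gives $|K_t(x,y)-K_t(x,x_i)|\lesssim \min\{t,r_i\}\,(t+R)^{-(n+1)}$; squaring, summing over this geometric progression of scales, and splitting into the ranges $t\leq r_i$, $r_i<t\leq R$, and $t>R$ produces
\[ \bigl\|\vec{K}(x,y)-\vec{K}(x,x_i)\bigr\|_{\ell^2(\mathbb{Z})}\ \lesssim\ r_i\,R^{-(n+1)}\sqrt{\log(2R/r_i)}. \]
Integrating this over the dyadic annuli $\{\,4\cdot2^k r_i\leq r(x,x_i)<4\cdot2^{k+1}r_i\,\}$, $k\geq0$, and using the Riemannian volume bound $\mu(B(x_i,\rho))\lesssim\rho^n$, the $k$-th annulus contributes at most a constant times $\sqrt{k+1}\,2^{-k}$, and summing over $k$ gives the displayed H\"ormander bound.

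With this in hand, the cancellation $\int_{\mathcal{M}}b_i\,d\mu=0$ lets us write $T_jb_i(x)=\int_{B(x_i,r_i)}\bigl(K_{2^{-j/2}}(x,y)-K_{2^{-j/2}}(x,x_i)\bigr)b_i(y)\,d\mu(y)$, so Minkowski's integral inequality in $\ell^2(\mathbb{Z})$ and Tonelli's theorem give $\int_{\mathcal{M}\setminus B(x_i,4r_i)}\|\vec{T}b_i\|_{\ell^2(\mathbb{Z})}\,d\mu\lesssim\|b_i\|_1\lesssim\alpha\,\mu(B(x_i,r_i))$. Since $\mathcal{M}\setminus E\subseteq\mathcal{M}\setminus B(x_i,4r_i)$ for every $i$, summing the triangle inequality $\|\vec{T}b(x)\|_{\ell^2(\mathbb{Z})}\leq\sum_i\|\vec{T}b_i(x)\|_{\ell^2(\mathbb{Z})}$ over $\mathcal{M}\setminus E$ gives $\int_{\mathcal{M}\setminus E}\|\vec{T}b\|_{\ell^2(\mathbb{Z})}\,d\mu\lesssim\alpha\sum_i\mu(B(x_i,r_i))\lesssim\|f\|_1$, whence $\mu(\{x\notin E:\|\vec{T}b(x)\|_{\ell^2(\mathbb{Z})}>\alpha/2\})\lesssim\alpha^{-1}\|f\|_1$ by Chebyshev. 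Adding the three contributions (the exceptional set $E$, the good part, the bad part off $E$) bounds $\mu(\{\|\vec{T}f\|_{\ell^2(\mathbb{Z})}>\alpha\})$ by a constant times $\alpha^{-1}\|f\|_1$; combined with the trivial small-$\alpha$ case this gives the asserted weak-type bound. I expect the main obstacle to be the scale-sum: neither the size estimate nor the regularity estimate for $\vec{K}$ is square-summable over all $j\in\mathbb{Z}$ on its own, so one must interpolate between them scale by scale, which is what produces the $\sqrt{\log}$ factor above — harmless once integrated against the polynomial volume growth of the annuli. A secondary issue, dispatched by the reduction in the first paragraph, is ensuring that Lemma~\ref{kernel regularity bound} (which requires $r(y,x_i)<\delta$ and $y,x_i$ in a common coordinate ball) legitimately applies.
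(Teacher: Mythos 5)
Your argument is correct and follows the same overall skeleton as the paper's proof: dispose of small $\alpha$ trivially, apply the Calder\'on--Zygmund decomposition, handle the good part via Chebyshev and the $\Lb^2$ bound of Theorem~\ref{wavelet frame bounded in l2}, remove a dilated exceptional set by doubling, and control the bad part off that set by a H\"ormander-type kernel estimate exploiting the cancellation of each $b_i$. The differences lie in how the kernel estimate is executed, and they are genuine. First, the paper does not reduce to small balls: it keeps both cases $2r_i\geq\delta$ and $2r_i<\delta$, treating the first with the size bound of Lemma~\ref{kernel decya condition} alone (using $r(x,x_i)\geq\delta$ to sum the scales $j\geq 0$) and the second with Lemma~\ref{kernel regularity bound}; your preliminary observation that for $\alpha$ larger than a fixed multiple of $\|f\|_1$ every Calder\'on--Zygmund ball must have radius below $\delta/4$ (because each large ball would already carry measure at least $\inf_x\mu(B(x,\delta/4))>0$) eliminates the first case entirely and, as a bonus, guarantees the hypothesis $r(y,x_i)<\min\{\tfrac12 r(x,y),\delta\}$ of Lemma~\ref{kernel regularity bound} cleanly. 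Second, the paper bounds the $\ell^2(\mathbb{Z})$ norm of the kernel difference by its $\ell^1$ norm in $j$ and then interpolates between the size and regularity bounds by taking a geometric mean with exponent $s=1/2$, splitting the scales at $2^{j/2}\approx 1/r(x,x_i)$; this yields the integrable majorant $r(x_i,y)\,r(x,x_i)^{-n-1}+r(x_i,y)^{1/2}r(x,x_i)^{-n-1/2}$ with no logarithm. You instead keep the $\ell^2$ norm, take $\min\{t,r_i\}(t+R)^{-(n+1)}$ at each scale, and absorb the resulting $\sqrt{\log(2R/r_i)}$ against the geometric decay over dyadic annuli. Both routes close; yours gives a slightly sharper vector-valued H\"ormander condition at the cost of the logarithmic bookkeeping, while the paper's $\ell^2\leq\ell^1$ reduction is cruder but keeps every scale sum purely geometric.
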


\begin{proof}
First, for any $\alpha$ such that $\alpha^{-1} \|f\|_{\Lb^1(\mathcal{M})} > \mu(\mathcal{M})$, we see that
\begin{align*}
\mu(\{x \in \mathcal{M} : \|\vec{T}f(x)\|_{\mathcal{B}} > \alpha \}) \leq \mu(\mathcal{M}) \leq \alpha^{-1}\|f\|_{\Lb^1(\mathcal{M})}.
\end{align*}
Now, we consider the case where $\alpha^{-1} \|f\|_{\Lb^1(\mathcal{M})} \leq \mu(\mathcal{M})$. We use our Caulderon-Zygmund decomposition and write $f = g+b$. It follows that 
\begin{align*}
\mu(\{x \in \mathcal{M} : \|\vec{T}f(x)\|_{\mathcal{B}} > \alpha\}) &\leq \mu(\{x \in \mathcal{M} : \|\vec{T}g(x)\|_{\mathcal{B}} > \alpha/2\}) + \mu(\{x \in \mathcal{M} : \|\vec{T}b(x)\|_{\mathcal{B}} > \alpha/2\})\\
& := I_1 + I_2.
\end{align*}
For $I_1$, we apply Chebyshev inequality, $\Lb^{2}_{\ell^2(\mathbb{Z})}(\mathcal{M})$ boundedness of $\vec{T}$, and our assumption on $g$ to find that  
\begin{align*}
\mu(\{x \in \mathcal{M} : \|\vec{T}g(x)\|_{\mathcal{B}} > \alpha/2\}) &\leq \frac{4}{\alpha^2} \|\vec{T}g\|_{\Lb^{2}_{\ell^2(\mathbb{Z})}(\mathcal{M})}^2 \\
& \leq \frac{4}{\alpha^2} \|\vec{T}\|_{\Lb^{2}(\mathcal{M}) \to \Lb^{2}_{\ell^2(\mathbb{Z})}(\mathcal{M})}^2 \|g\|_{\Lb^2(\mathcal{M})}^2 \\
& \leq \frac{4 C_1}{\alpha} \|\vec{T}\|_{\Lb^{2}(\mathcal{M}) \to \Lb^{2}_{\ell^2(\mathbb{Z})}(\mathcal{M})}^2\|f\|_{\Lb^1(\mathcal{M})}.
\end{align*}
For $I_2$, let $B = \bigcup_i B(x_i, 2r_i)$. Then it follows that 
\begin{align*}
I_2 &\leq \mu(B) + \mu(\{x \in B^{c} : \|\vec{T}b(x)\|_{\mathcal{B}} > \alpha/2\}\\
&\leq \frac{C_D}{\alpha} \|f\|_{\Lb^1(\mathcal{M})} + \frac{2}{\alpha} \| \vec{T}b \|_{\Lb^{1}_{\ell^2(\mathbb{Z})}(B^c)}  \\
&\leq \frac{C_D}{\alpha} \|f\|_{\Lb^1(\mathcal{M})} + \frac{2}{\alpha} \sum_{i} \| \vec{T}b_i \|_{\Lb^{1}_{\ell^2(\mathbb{Z})}(B(x_i, 2r_i)^c)},
\end{align*} 
where the constant $C_D$ comes from the fact that our measure has the doubling property.

To estimate $\| \vec{T}b_i \|_{\Lb^{1}_{\ell^2(\mathbb{Z})}(B(x_i, 2r_i)^c)}$, we notice that
\begin{align*}
\| \vec{T}b_i \|_{\Lb^{1}_{\ell^2(\mathbb{Z})}(B(x_i, 2r_i)^c)} &= \int_{B(x_i, 2r_i)^c} \| \vec{T}b_i(x) \|_{\ell^2(\mathbb{Z})} \, d \mu(x)\\
& = \int_{B(x_i, 2r_i)^c} \left\|\int_{B(x_i, r_i)} \vec{K}(x,y ) b_i(y)\, dy \right\|_{\ell^2(\mathbb{Z})} \, d\mu(x) \\ 
&  = \int_{B(x_i, 2r_i)^c} \left\|\int_{B(x_i, r_i)} (\vec{K}(x,y )-\vec{K}(x,x_i))b_i(y)\, d \mu(y) \right\|_{\ell^2(\mathbb{Z})} \, d\mu(x)\\
& = \int_{B(x_i, r_i)} \left(\int_{B(x_i, 2r_i)^c} \left(\sum_{j \in \mathbb{Z}}\left| {K}_{2^{-j/2}}(x,y )-{K}_{2^{-j/2}}(x,x_i)\right|^2 |b_i(y)|^2\right)^{1/2}  \, d\mu(x)\right) \, d\mu(y) \\ 
& \leq \int_{B(x_i, r_i)} |b_i(y)| \int_{B(x_i, 2r_i)^c}\sum_{j \in \mathbb{Z}}\left| {K}_{2^{-j/2}}(x,y )-{K}_{2^{-j/2}}(x,x_i)\right|  \, d\mu(x)  \, d\mu(y) \\
&= \int_{B(x_i, r_i)} |b_i(y)| \left(\sum_{j \in \mathbb{Z}}\int_{B(x_i, 2r_i)^c}\left| {K}_{2^{-j/2}}(x,y )-{K}_{2^{-j/2}}(x,x_i)\right|   \, d\mu(x)\right)  \, d\mu(y),
\end{align*} 
%\Yang{The notation $\vec{K}$ is not defined.} \textcolor{blue}{Albert: see pg 7.}
where $\vec{K}$ is the kernel defined on page 7. Now we consider the term inside the parentheses. We will break this argument into cases. First, consider if $2r_i \geq \delta$. We see that 
\begin{align*}
&\sum_{j \in \mathbb{Z}} \int_{B(x_i, 2r_i)^c}  |K_{2^{-j/2}}(x,y )-K_{2^{-j/2}}(x,x_i)| \, d\mu(x) \\
&\leq C \sum_{j \in \mathbb{Z}} \int_{B(x_i, 2r_i)^c} \frac{2^{nj/2}}{\left(1 + 2^{j/2} r(x,x_i)\right)^{n+1}}  + \frac{2^{nj/2}}{\left(1 + 2^{j/2} r(x,y)\right)^{n+1}} \, d\mu(x).
\end{align*}

Now, since $x_i$ is the center of $B(x_i, r_i)$, if $x \in B(x_i, 2r_i)^c$, then $r(x,x_i) \geq 2r_i \geq \delta$. Similarly, since $y \in B(x_i, r_i)$, it follows that $r(y, x_i) < r_i$ and we have $2r(y,x_i) \leq r(x,x_i)$. Apply triangle inequality to get
$$r(x,x_i) \leq r(x,y) + r(y,x_i) \leq r(x,y) + r_i \leq r(x,y) + \frac{1}{2}r(x,x_i),$$
which means that $r(x,x_i) \leq 2r(x,y).$

Going back to the integral, there exists $C_1$ such that
\begin{align*}
&\sum_{j \in \mathbb{Z}} \int_{B(x_i, 2r_i)^c} \frac{2^{nj/2}}{\left(1 + 2^{j/2} r(x,y)\right)^{n+1}}  + \frac{2^{nj/2}}{\left(1 + 2^{j/2} r(x,x_i)\right)^{n+1}} \, d\mu(x) \\
&\leq C_1 \sum_{j \in \mathbb{Z}} \int_{r(x,x_i) \geq 2r(y, x_i)} \frac{2^{nj}}{\left(1 + 2^j r(x,x_i)\right)^{n+1}} \, d\mu(x)\\
&= C_1 \sum_{j \geq 0} \int_{r(x,x_i) \geq 2r(y, x_i)} \frac{2^{nj/2}}{\left(1 + 2^{j/2} r(x,x_i)\right)^{n+1}} \, d\mu(x) + C_1 \sum_{j <0 } \int_{r(x,x_i) \geq 2r(y, x_i)} \frac{2^{nj/2}}{\left(1 + 2^{j/2} r(x,x_i)\right)^{n+1}} \, d\mu(x) \\
&:= J_1 + J_2.
\end{align*}
For $J_1$, since $r(x,x_i) > \delta$, 
$$ \sum_{j \geq 0}\int_{r(x,x_i) \geq 2r(y, x_i)} \frac{2^{nj/2}}{\left(1 + 2^{j/2} r(x,x_i)\right)^{n+1}} \, d\mu(x) \leq   \sum_{j \geq 0} 2^{-j/2} \delta^{n+1} < \infty.$$
For $J_2$, it is routine to see that  
$$ \sum_{j < 0}\int_{r(x,x_i) \geq 2r(y, x_i)} \frac{2^{nj/2}}{\left(1 + 2^{j/2} r(x,x_i)\right)^{n+1}} \, d\mu(x) \leq \sum_{j < 0} 2^{nj/2} \mu(\mathcal{M}) < \infty.$$

Now we consider the case where $2r_i < \delta$. In this case, we see that $r(y, x_i) < r_i < \delta$, and we still have $2r(y,x_i) < r(x,x_i)$. Thus, the bound 
$$|K_{2^{-j/2}}(x,y )-K_{2^{-j/2}}(x,x_i)| \leq C\frac{2^{nj/2}}{\left(1 + 2^{j/2} r(x,x_i)\right)^{n+1}}$$
still applies. We can also apply Lemma \ref{kernel regularity bound} to get 
$$|K_{2^{-j/2}}(x,y)-K_{2^{-j/2}}(x,x_i)| \leq C_\delta \frac{r(y,x_i) 2^{(n+1)j/2}}{\left(1+ 2^{j/2}r(x,x_i)\right)^{n+1}}.$$

Taking the geometric mean, for any $s \in [0,1]$, we have
$$|K_{2^{-j/2}}(x,y)-K_{2^{-j/2}}(x,x_i)| \leq C_2  \frac{2^{nj/2} (2^{j/2} r(y,x_i))^s}{\left(1+ 2^{j/2}r(x,x_i)\right)^{n+1}}$$
for some constant $C_2$. It now follows that for $C_3 = \max\{C_\delta, C_2\}$, we have
\begin{align*}
&\sum_{j \in \mathbb{Z}}|K_{2^{-j/2}}(x,y)-K_{2^{-j/2}}(x,x_i)| \\
& \leq \sum_{2^{j/2} < \frac{2}{r(x,x_i)}}|K_{2^{-j/2}}(x,y)-K_{2^{-j/2}}(x,x_i)|  + \sum_{2^{j/2} \geq \frac{2}{r(x,x_i)}}|K_{2^{-j/2}}(x,y)-K_{2^{-j/2}}(x,x_i)| \\
& \leq C_\delta \sum_{2^{j/2} < \frac{2}{r(x,x_i)}}\frac{r(y,x_i) 2^{(n+1)j/2}}{\left(1+ 2^{j/2}r(x,x_i)\right)^{n+1}} + C_2\sum_{2^{j/2} \geq \frac{2}{r(x,x_i)}} \frac{2^{nj/2} (2^{j/2} r(y,x_i))^{1/2}}{\left(1+ 2^{j/2}r(x,x_i)\right)^{n+1}} \\
&\leq C_3 \left(r(x_i,y) \sum_{2^{j/2} < \frac{2}{r(x,x_i)}} 2^{(n+1)j/2}  + r(x_i,y)^{1/2}\sum_{2^{j/2} \geq \frac{2}{r(x,x_i)}} 2^{(n+1/2)j/2}(2^{j/2}r(x,x_i))^{-n-1}\right)\\
& \leq C_3(r(x_i,y) r(x,x_i)^{-n-1} + r(x_i,y)^{1/2} r(x,x_i)^{-n-1/2}).
\end{align*} 
%\Yang{The roles of $C_2$ and $C_\delta$ are switched. In the second but last line, the power $-n-1/2$ should be $-n-1$ (decay in $j$ is needed to make the infinite sum finite.)} 
%\textcolor{blue}{Albert: That is correct. Fixing that is no problem.}
Integrating over $2r(x_i,y) \leq r(x,x_i)$ yields a constant independent of $r_i$. It now follows that 
$$\| \vec{T}b_i \|_{\Lb^{1}_{\ell^2(\mathbb{Z})}(B(x_i, r_i)^c)} \leq C_4 \|b_i\|_{\Lb^1(\mathcal{M})}$$
for some constant $C_4$. Using the Caulderon-Zygmund decomposition,
$$I_2 \leq \left(\frac{C_D}{\alpha} + \frac{2 C_4}{\alpha}\right) \|f\|_{\Lb^1(\mathcal{M})}.$$
\end{proof}

Recall the following result, which is a vector-valued version of Marcinkiewicz Interpolation: 
\begin{lemma} [\cite{vectorvaluedinequalities}] 
Let $\mathcal{A}_1, \mathcal{A}_2$ be Banach spaces,  $T:\mathcal{A}_1 \to \mathcal{A}_2$ be quasilinear on $\Lb^{p_0}_{\mathcal{A}_1}(X)$ and $\Lb^{p_1}_{\mathcal{A}_1}(X)$ with $0 < p_0 < p_1$. If $T$ satisfies
$$\|Tf\|_{\Lb^{p_i,\infty}_{\mathcal{A}_2}(X)} \leq M_i\|f\|_{\Lb_{\mathcal{A}_1}^{p_i}(X)}$$ for $i = 0, 1$, then
$$\|Tf\|_{\Lb^p_{\mathcal{A}_2(X)}} \leq N_p \|f\|_{\Lb_{\mathcal{A}_1}^{p}(X)} \qquad \forall p \in (p_0, p_1),$$
where $N_p$ is dependent on $p$.
\end{lemma}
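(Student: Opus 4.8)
The plan is to run the classical real‑interpolation argument behind the Marcinkiewicz theorem, the point being that the Banach spaces $\mathcal{A}_1,\mathcal{A}_2$ enter only through the scalar‑valued functions $x\mapsto\|f(x)\|_{\mathcal{A}_1}$ and $x\mapsto\|Tf(x)\|_{\mathcal{A}_2}$, so the whole proof reduces to manipulating their distribution functions exactly as in the scalar case. Throughout I write $d_h(\lambda):=\mu(\{x\in X:\|h(x)\|_{\mathcal{A}}>\lambda\})$ for the distribution function of $\|h(\cdot)\|_{\mathcal{A}}$ and use the layer‑cake identity $\|h\|_{\Lb^p_{\mathcal{A}}(X)}^p=p\int_0^\infty\lambda^{p-1}d_h(\lambda)\,d\lambda$, valid for $1\le p<\infty$ by Tonelli applied to $\mathbf{1}_{\{\|h(x)\|_{\mathcal{A}}>\lambda\}}$; measurability of the norm functions, which is part of the standing definition of $\Lb^p_{\mathcal{A}}$, is exactly what makes these scalar reductions legitimate.

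Fix $p\in(p_0,p_1)$. For each threshold $\lambda>0$ I would split $f=f_0^\lambda+f_1^\lambda$ with $f_0^\lambda(x):=f(x)\,\mathbf{1}_{\{\|f(x)\|_{\mathcal{A}_1}>\lambda\}}$ and $f_1^\lambda:=f-f_0^\lambda$; since this is just pointwise multiplication by an indicator it respects the $\mathcal{A}_1$‑valued structure, and since $p_0<p<p_1$ one checks $f_0^\lambda\in\Lb^{p_0}_{\mathcal{A}_1}(X)$ and $f_1^\lambda\in\Lb^{p_1}_{\mathcal{A}_1}(X)$ (when $p_1=\infty$ one uses $\|f_1^\lambda\|_{\Lb^\infty_{\mathcal{A}_1}}\le\lambda$ instead and adjusts constants). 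Quasilinearity gives a constant $K\ge1$ with $\|Tf(x)\|_{\mathcal{A}_2}\le K(\|Tf_0^\lambda(x)\|_{\mathcal{A}_2}+\|Tf_1^\lambda(x)\|_{\mathcal{A}_2})$ a.e., hence $d_{Tf}(\lambda)\le d_{Tf_0^\lambda}(\lambda/2K)+d_{Tf_1^\lambda}(\lambda/2K)$, and the two hypothesized weak‑type bounds yield
$$d_{Tf_0^\lambda}\!\left(\tfrac{\lambda}{2K}\right)\le\left(\tfrac{2KM_0}{\lambda}\right)^{p_0}\!\int_{\{\|f(x)\|_{\mathcal{A}_1}>\lambda\}}\!\|f(x)\|_{\mathcal{A}_1}^{p_0}\,d\mu(x),\qquad d_{Tf_1^\lambda}\!\left(\tfrac{\lambda}{2K}\right)\le\left(\tfrac{2KM_1}{\lambda}\right)^{p_1}\!\int_{\{\|f(x)\|_{\mathcal{A}_1}\le\lambda\}}\!\|f(x)\|_{\mathcal{A}_1}^{p_1}\,d\mu(x).$$

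Finally I would substitute these into the layer‑cake identity for $\|Tf\|_{\Lb^p_{\mathcal{A}_2}}^p$, interchange the $\lambda$‑ and $x$‑integrations by Tonelli, and evaluate the elementary integrals $\int_0^{\|f(x)\|_{\mathcal{A}_1}}\lambda^{p-1-p_0}\,d\lambda$ (finite since $p>p_0$) and $\int_{\|f(x)\|_{\mathcal{A}_1}}^\infty\lambda^{p-1-p_1}\,d\lambda$ (finite since $p<p_1$), which collapses everything to $\|f\|_{\Lb^p_{\mathcal{A}_1}}^p$ and produces
$$\|Tf\|_{\Lb^p_{\mathcal{A}_2}(X)}^p\le\left(\frac{p\,(2KM_0)^{p_0}}{p-p_0}+\frac{p\,(2KM_1)^{p_1}}{p_1-p}\right)\|f\|_{\Lb^p_{\mathcal{A}_1}(X)}^p,$$
so one may take $N_p$ to be the $p$‑th root of the bracketed quantity (with the obvious modification when $p_1=\infty$). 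I do not anticipate a genuine obstacle here: the argument is the textbook scalar proof, and the only things needing care are justifying the two Tonelli interchanges, tracking the constant $N_p$, and treating the $p_1=\infty$ endpoint separately — all routine once one has noted that the Banach‑valued setting contributes nothing beyond replacing absolute values by $\mathcal{A}_i$‑norms.
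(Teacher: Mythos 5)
Your argument is correct: the paper does not prove this lemma at all --- it is imported verbatim from the cited reference on vector-valued inequalities --- so there is no in-paper proof to compare against. What you have written is the standard real-variable proof of Marcinkiewicz interpolation (level-set splitting $f=f_0^\lambda+f_1^\lambda$, quasilinearity to split the distribution function, the two weak-type hypotheses, layer-cake plus Tonelli), and your observation that the Banach-valued setting enters only through the measurable scalar functions $x\mapsto\|f(x)\|_{\mathcal{A}_1}$ and $x\mapsto\|Tf(x)\|_{\mathcal{A}_2}$ is exactly the right reduction; the computation of $N_p$ with its blow-up as $p\to p_0$ or $p\to p_1$ is also correct. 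The only cosmetic point is that the paper's weak-norm $\Lb^{p,\infty}_{\mathcal{B}}$ is defined only for $1\le p<\infty$, so the $p_1=\infty$ caveat is moot here, and the layer-cake identity in fact holds for all $0<p<\infty$, not just $p\ge 1$.
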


The following corollary is a direct result of interpolation now:
\begin{corollary} \label{operator extension theorem} Suppose that we choose wavelets $\{\psi_{j}\}_{j \in \mathbb{Z}}$ generated by using $G \in \mathcal{S}(\mathbb{R}^{+})$ in Equation \ref{eq: wavelet definition}, $G$ satisfies the conditions of Theorem \ref{wavelet frame bounded in l2}, and $c_1 c_2 < 2$. We have
$$\|\vec{T}f\|_{\Lb^q_{\ell^2(\mathbb{Z})}(\mathcal{M})}^q \leq C_q \|f \|_{\Lb^q(\mathcal{M})}^q$$
for some constant $C_q > 0$, where $q \in (1,2)$.
\end{corollary}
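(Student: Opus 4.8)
The plan is to deduce the corollary by interpolating between the endpoint exponents $p_0=1$ and $p_1=2$ via the vector-valued Marcinkiewicz interpolation lemma stated above, applied with $\mathcal{A}_1=\mathbb{R}$ and $\mathcal{A}_2=\ell^2(\mathbb{Z})$. First I would record the structural hypotheses needed to invoke that lemma: $\vec{T}$ is linear, hence quasilinear, on both $\Lb^1(\mathcal{M})$ and $\Lb^2(\mathcal{M})$, and $x\mapsto\|\vec{T}f(x)\|_{\ell^2(\mathbb{Z})}$ is measurable, being the pointwise supremum of the measurable partial Littlewood--Paley sums $\big(\sum_{|j|\le J}|f\ast\psi_j(x)|^2\big)^{1/2}$.

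Next I would assemble the two endpoint bounds. For $p_1=2$, Theorem \ref{wavelet frame bounded in l2} yields the strong-type estimate $\|\vec{T}f\|_{\Lb^2_{\ell^2(\mathbb{Z})}(\mathcal{M})}=C\|f\|_{\Lb^2(\mathcal{M})}$ with $C=G(0)$, and a strong-type bound always dominates the corresponding weak-type bound (this is the inequality $\|g\|_{\Lb^{p,\infty}_{\mathcal{B}}(\mathcal{M})}\le\|g\|_{\Lb^p_{\mathcal{B}}(\mathcal{M})}$ noted at the beginning of this section, equivalently a one-line application of Chebyshev's inequality), so $\|\vec{T}f\|_{\Lb^{2,\infty}_{\ell^2(\mathbb{Z})}(\mathcal{M})}\le C\|f\|_{\Lb^2(\mathcal{M})}$. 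For $p_0=1$, the weak-type $(1,1)$ estimate proved in the lemma immediately preceding this corollary gives $\|\vec{T}f\|_{\Lb^{1,\infty}_{\ell^2(\mathbb{Z})}(\mathcal{M})}\le A\|f\|_{\Lb^1(\mathcal{M})}$; this is exactly the step where the standing hypotheses ``$G$ satisfies the conditions of Theorem \ref{wavelet frame bounded in l2}'' and ``$c_1c_2<2$'' are consumed, the latter entering through Lemma \ref{kernel regularity bound}. Then I would apply the interpolation lemma with $p_0=1$, $p_1=2$, $M_0=A$, $M_1=C$ to obtain, for each $q\in(1,2)$, a constant $N_q>0$ with $\|\vec{T}f\|_{\Lb^q_{\ell^2(\mathbb{Z})}(\mathcal{M})}\le N_q\|f\|_{\Lb^q(\mathcal{M})}$; raising both sides to the $q$-th power and setting $C_q=N_q^q$ gives the stated inequality.

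There is no genuine obstacle here: the corollary is a bookkeeping consequence of the three preceding results. The only points requiring care are cosmetic — verifying that the hypotheses of the vector-valued Marcinkiewicz lemma are literally met (quasilinearity, measurability, the correct target space $\ell^2(\mathbb{Z})$, and both endpoints phrased as weak-type bounds rather than the strong-type bound we actually possess at $p=2$), and noting that the dependence of $C_q$ on $q$ is inherited from the dependence of $N_p$ on $p$ in the interpolation lemma (so in particular $C_q$ may blow up as $q\downarrow 1$ or $q\uparrow 2$).
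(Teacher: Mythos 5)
Your proposal is correct and matches the paper's own (implicit) argument: the paper likewise presents this corollary as a direct consequence of applying the vector-valued Marcinkiewicz interpolation lemma to the weak-$(1,1)$ bound of the preceding lemma and the $\Lb^2$ bound from Theorem \ref{wavelet frame bounded in l2}. Your additional remarks on quasilinearity, measurability, and converting the strong $(2,2)$ bound to a weak-type bound are exactly the routine verifications the paper leaves unstated.
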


By duality, the result of Corollary \ref{operator extension theorem} actually holds for $q \in(1, \infty)$. However, for generalizing the nonwindowed geometric scattering transform, we only need results for $q \in (1,2)$ since $\Lb^2(\mathcal{M}) \subset \Lb^q(\mathcal{M})$. For $q > 2$, since our manifold is compact, we have $\Lb^q(\mathcal{M}) \subset \Lb^2(\mathcal{M})$, so previous results in \cite{geometric} are applicable, and further theoretical analysis is not as significant.

Additionally, although the result of Corollary \ref{operator extension theorem} seems restrictive because one needs $c_1c_2 < 2$, the result applies for a variety of different manifolds. If one finds a metric where the condition above holds, a class of metrics can be found by perturbing the metric. 
This is because the choice of the constants $c_1,c_2$ in the proof of Lemma \ref{manifold regularity} depend continuously on the metric $g$. Thus if $c_1c_2 < 2$ for $g$, the same strict inequality holds for all metrics that are sufficiently close to $g$. 
We provide a simple example below where the conditions of Lemma \ref{manifold regularity} hold. The result of the example below can also be extended to $n$-torii without much difficulty.

\begin{example}
Take $R=1$ and use the charts:
$$
\begin{array}{ll}
    V_1 := \{(x_1,x_2): x_1^2+x_2^2=1, \; x_1>0\}, & \qquad 
    \varphi_1: V_1\rightarrow\mathbb{R}, \; (x_1,x_2)\mapsto x_2 \\
    V_2 := \{(x_1,x_2): x_1^2+x_2^2=1, \; x_2>0\}, & \qquad 
    \varphi_2: V_2\rightarrow\mathbb{R}, \; (x_1,x_2)\mapsto x_1 \\
    V_3 := \{(x_1,x_2): x_1^2+x_2^2=1, \; x_1<0\}, & \qquad 
    \varphi_3: V_3\rightarrow\mathbb{R}, \; (x_1,x_2)\mapsto x_2 \\
    V_4 := \{(x_1,x_2): x_1^2+x_2^2=1, \; x_2<0\}, & \qquad 
    \varphi_4: V_4\rightarrow\mathbb{R}, \; (x_1,x_2)\mapsto x_1.
\end{array}
$$
These are clearly diffeomorphisms. Choose
$$P = \{(-\tfrac{\pi}{3}+\omega, \tfrac{\pi}{3}-\omega), (\tfrac{\pi}{6}+\omega, \tfrac{5 \pi}{6}-\omega), (\tfrac{2\pi}{3}+\omega, \tfrac{4 \pi}{3}-\omega), (\tfrac{7 \pi}{6}+\omega, \tfrac{11 \pi}{6}-\omega)\} := \{P_1, P_2, P_3, P_4\}.$$
Here $\omega\in (0,\frac{\pi}{12})$ is a small angle.
The covers $P_i$ clearly satisfy the first two conditions laid out in Lemma \ref{manifold regularity}. Equip $\mathbb{S}^1$ with the standard metric induced by the inclusion $\mathbb{S}^1 \hookrightarrow \mathbb{R}^2$. We can verify the desired estimates in Lemma \ref{manifold regularity} as follows: It is clear that any arc of $\mathbb{S}^1$ with length less than $\tfrac{\pi}{6}$ is contained in one of $P_1, \dots, P_4$. If we choose $\delta\in (0,\tfrac{\pi}{36})$, then for any $x\in\mathbb{S}^1$, $B(x,3\delta)\subset P_i$ for some $i$. Suppose
\begin{align*}
y = (y_1,y_2) = (\cos\theta,\sin\theta), \\
z = (z_1,z_2) = (\cos\tilde\theta,\sin\tilde\theta), \\
\theta,\tilde\theta\in (-\tfrac{\pi}{3}+\omega, \tfrac{\pi}{3}-\omega), \\
y_2,z_2\in (\sin(-\tfrac{\pi}{3}+\omega), \sin(\tfrac{\pi}{3}-\omega)).
\end{align*}
The $\varphi_1$-coordinates of $y,z$ are $y_2, z_2$, respectively. Hence,
\begin{align*}
|y-z| & = |y_2-z_2| = |\sin\theta - \sin\tilde\theta| \\
r(y,z) & = |\theta-\tilde\theta|.
\end{align*}
By the mean value theorem:
$$
|y-z| = |\sin\theta - \sin\tilde\theta| = |\cos\xi|_{\xi\in (-\tfrac{\pi}{3}+\omega, \tfrac{\pi}{3}-\omega)} |\theta-\tilde\theta| \leq |\theta-\tilde\theta| = r(y,z).
$$
and
\begin{align*}
r(y,z) & = |\theta-\tilde\theta| \\
& = |\arcsin y_2 - \arcsin z_2| \\
&= \left|\frac{1}{\sqrt{1-\eta^2}}\right|_{\eta\in (\sin(-\tfrac{\pi}{3}+\omega), \sin(\tfrac{\pi}{3}-\omega))} |y_2-z_2| \\
 & \leq \frac{1}{\sqrt{1-\sin^2\left( \frac{\pi}{3}-\omega \right)}} |y_2-z_2| \\
 &= \frac{1}{\sqrt{1-\sin^2\left( \frac{\pi}{3}-\omega \right)}} |y-z|.
\end{align*}
This suggests the choice $c_1=1$ and $c_2 = \frac{1}{\sqrt{1-\sin^2\left( \frac{\pi}{3}-\omega \right)}}$. We have
$$
c_1 c_2 = \frac{1}{\sqrt{1-\sin^2\left( \frac{\pi}{3}-\omega \right)}} < \frac{1}{\sqrt{1-\sin^2\left( \frac{\pi}{3} \right)}} = 2.
$$

The analysis for $y,z\in P_3$ is similar, only with the difference that $\theta,\tilde\theta\in (\tfrac{2\pi}{3}+\omega, \tfrac{4\pi}{3}-\omega)$ and we have $y_2,z_2\in (\sin(\tfrac{4\pi}{3}-\omega), \sin(\tfrac{2\pi}{3}+\omega)) = (\sin(-\tfrac{\pi}{3}+\omega), \sin(\tfrac{\pi}{3}-\omega))$.

Next, consider $y,z\in P_2$. Suppose
\begin{align*}
y = (y_1,y_2) = (\cos\theta,\sin\theta), \\
z = (z_1,z_2) = (\cos\tilde\theta,\sin\tilde\theta), \\
\theta,\tilde\theta\in (\tfrac{\pi}{6}+\omega, \tfrac{5\pi}{6}-\omega), \\
y_1,z_1\in (\cos(\tfrac{5\pi}{6}-\omega), \cos(\tfrac{\pi}{6}+\omega)).
\end{align*}
The $\varphi_2$-coordinates of $y,z$ are $y_1, z_1$, respectively. Hence,
\begin{align*}
|y-z| & = |y_1-z_1| = |\cos\theta - \cos\tilde\theta| \\
r(y,z) & = |\theta-\tilde\theta|.
\end{align*}
By the mean value theorem:
$$
|y-z| = |\cos\theta - \cos\tilde\theta| = |\sin\xi|_{\xi\in (\tfrac{\pi}{6}+\omega, \tfrac{5\pi}{6}-\omega)} |\theta-\tilde\theta| \leq |\theta-\tilde\theta| = r(y,z).
$$
and
\begin{align*}
r(y,z) & = |\theta-\tilde\theta| \\
&= |\arccos y_1 - \arccos z_1| \\
&= \left|\frac{1}{\sqrt{1-\eta^2}}\right|_{\eta\in (\cos(\tfrac{5\pi}{6}-\omega), \cos(\tfrac{\pi}{6}+\omega))} |y_1-z_1| \\
 & \leq  \frac{1}{\sqrt{1-\cos^2(\tfrac{\pi}{6}+\omega)}} |y_1-z_1|\\
 &= \frac{1}{\sqrt{1-\cos^2(\tfrac{\pi}{6}+\omega)}} |y-z|.  
\end{align*}
This suggests the choice $c_1=1$ and $c_2 = \frac{1}{\sqrt{1-\cos^2(\tfrac{\pi}{6}+\omega)}}$. We have
$$
c_1 c_2 = \frac{1}{\sqrt{1-\cos^2(\tfrac{\pi}{6}+\omega)}} < \frac{1}{\sqrt{1-\cos^2\left( \frac{\pi}{6} \right)}} = 2.
$$
Note that the choice agrees with the case $y,z\in P_1$. The analysis for $y,z\in P_4$ is similar as well, which proves the desired claim.
\end{example}

\section{Generalizing Nonwindowed Geometric Scattering}
Now that we have developed the machinery necessary for the rest of the paper, we prove the $q$-nonwindowed scattering transforms are bounded operators with respect to \eqref{eqn: scattering norm} and outline basic properties of the representation.

\subsection{The $2$-nonwindowed Geometric Scattering Norm}
We start by proving that 2-nonwindowed scattering transforms are bounded operators with respect to \eqref{eqn: scattering norm}.

\begin{theorem} \label{thm: q = 2 nonexpansive}
Suppose that $G$ satisfies the conditions of Theorem \ref{wavelet frame bounded in l2} and $\{\psi_j\}_{j \in \mathbb{Z}}$ be a set of spectral filters generated by using $G$ in Equation \ref{eq: wavelet definition}. Then we have
$$\|\overline{S}_2^m f- \overline{S}_2^m g\|^2_{\ell^2(\mathbb{Z}^m)} \leq  C^{2m}\|f - g\|_{\Lb^2(\mathcal{M})}^2$$
for all $f, g \in \Lb^2(\mathcal{M})$.
\end{theorem}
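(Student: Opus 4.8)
The plan is to reduce the claim to a statement about $\Lb^2(\mathcal{M})$ norms of differences of propagators, and then iterate Theorem \ref{wavelet frame bounded in l2} layer by layer. Since $q = 2$, the exponent $q/2 = 1$ in \eqref{eqn: scattering norm} collapses, so by definition
$$\|\overline{S}_2^m f - \overline{S}_2^m g\|^2_{\ell^2(\mathbb{Z}^m)} = \sum_{j_1 \in \mathbb{Z}} \cdots \sum_{j_m \in \mathbb{Z}} \left| \|U[j_1,\ldots,j_m]f\|_{\Lb^2(\mathcal{M})} - \|U[j_1,\ldots,j_m]g\|_{\Lb^2(\mathcal{M})} \right|^2.$$
By the reverse triangle inequality in $\Lb^2(\mathcal{M})$, each summand is bounded by $\|U[j_1,\ldots,j_m]f - U[j_1,\ldots,j_m]g\|^2_{\Lb^2(\mathcal{M})}$, so it suffices to establish
$$\sum_{j_1,\ldots,j_m} \|U[j_1,\ldots,j_m]f - U[j_1,\ldots,j_m]g\|^2_{\Lb^2(\mathcal{M})} \leq C^{2m} \|f - g\|^2_{\Lb^2(\mathcal{M})}.$$

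Next I would exploit that the modulus map is $1$-Lipschitz, i.e. $\big||a| - |b|\big| \leq |a-b|$ pointwise. Writing $h_f := U[j_1,\ldots,j_{m-1}]f$ and $h_g := U[j_1,\ldots,j_{m-1}]g$ (both in $\Lb^2(\mathcal{M})$, since convolution with $\psi_j$ and the modulus preserve $\Lb^2(\mathcal{M})$), one gets
$$\|U[j_1,\ldots,j_m]f - U[j_1,\ldots,j_m]g\|_{\Lb^2(\mathcal{M})} = \big\| |h_f \ast \psi_{j_m}| - |h_g \ast \psi_{j_m}| \big\|_{\Lb^2(\mathcal{M})} \leq \|(h_f - h_g) \ast \psi_{j_m}\|_{\Lb^2(\mathcal{M})}.$$
Summing over $j_m$ and applying Theorem \ref{wavelet frame bounded in l2} to the function $h_f - h_g \in \Lb^2(\mathcal{M})$ gives
$$\sum_{j_m \in \mathbb{Z}} \|U[j_1,\ldots,j_m]f - U[j_1,\ldots,j_m]g\|^2_{\Lb^2(\mathcal{M})} \leq C^2 \|U[j_1,\ldots,j_{m-1}]f - U[j_1,\ldots,j_{m-1}]g\|^2_{\Lb^2(\mathcal{M})}.$$

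Finally I would run an induction on the number of layers $m$. Summing the last display over the remaining indices $j_1,\ldots,j_{m-1}$ reduces the $m$-layer quantity to $C^2$ times the $(m-1)$-layer quantity for the \emph{same} pair $f,g$; iterating $m$ times and using $U[\emptyset]f = f$, $U[\emptyset]g = g$ yields the bound $C^{2m}\|f-g\|^2_{\Lb^2(\mathcal{M})}$, the base case $m = 0$ being the reverse triangle inequality $\big|\|f\|_{\Lb^2(\mathcal{M})} - \|g\|_{\Lb^2(\mathcal{M})}\big| \leq \|f-g\|_{\Lb^2(\mathcal{M})}$. I do not expect a serious obstacle here: the only analytic input is Theorem \ref{wavelet frame bounded in l2}, and everything else is the non-expansiveness of the modulus plus bookkeeping of the multi-index sums. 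The reason the argument is this clean at $q = 2$ is that the $\ell^2$ aggregation exponent matches the $\Lb^2(\mathcal{M})$ frame identity exactly; for $q \in (1,2)$ one must instead invoke Corollary \ref{operator extension theorem}, and the layerwise peeling no longer telescopes with an exact constant, which is precisely what makes the general-$q$ case harder.
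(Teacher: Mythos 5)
Your proposal is correct and follows essentially the same route as the paper's proof: reverse triangle inequality on the $\Lb^2(\mathcal{M})$ norms, pointwise non-expansiveness of the modulus, an application of Theorem \ref{wavelet frame bounded in l2} to peel off the innermost sum over $j_m$, and iteration over the layers. The only difference is organizational --- you phrase the layer-peeling as a clean induction on the auxiliary quantity $\sum_{j_1,\ldots,j_m}\|U[j_1,\ldots,j_m]f-U[j_1,\ldots,j_m]g\|_{\Lb^2(\mathcal{M})}^2$, whereas the paper unrolls the same recursion explicitly inside one chain of inequalities.
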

\begin{proof}
In the case of $m  = 1$, we see that 
\begin{align*}
\sum_{j \in \mathbb{Z}} |\overline{S}_2f[j] - \overline{S}_2g[j]|^2 &= \sum_{j \in \mathbb{Z}} |\|f \ast \psi_j\|_2 - |g \ast \psi_j\|_2|^2 \\
&\leq \sum_{j \in \mathbb{Z}} \|f \ast \psi_j-g \ast \psi_j\|_2^2 \\
&= \sum_{j \in \mathbb{Z}} \|(f-g) \ast \psi_j\|_2^2 \\
&\leq C^2 \|f-g\|_{\Lb^2(\mathcal{M})}^2.
\end{align*}
%\Yang{The notations $\overline{S}_2f[j]$, $\overline{S}_2g[j]$ are not consistent with (17).} \textcolor{blue}{Albert: it matches with (18) though.}
For the $m+1$ case, we proceed recursively and see that 
\begin{align*}
&\sum_{(j_1, \ldots, j_{m+1}) \in \mathbb{Z}^{m+1}} \left|\overline{S}_2^{m+1}f[j_1, \ldots, j_{m+1}]- \overline{S}_2^{m+1}g[j_1, \ldots, j_{m+1}]\right|^2 \\
&= \sum_{(j_1, \ldots, j_{m+1}) \in \mathbb{Z}^{m+1}} \left|\|U[j_1, \ldots, j_m]f \ast \psi_{j+1} \|_2 - \|U[j_1, \ldots, j_m]g \ast \psi_{j+1} \|_2\right|^2 \\
&\leq \sum_{(j_1, \ldots, j_{m+1}) \in \mathbb{Z}^{m+1}} \|(U[j_1, \ldots, j_m]f - U[j_1, \ldots, j_m]g) \ast \psi_{j+1} \|_2^2 \\
&\leq C^2\sum_{(j_1, \ldots, j_m) \in \mathbb{Z}^{m}} \|U[j_1, \ldots, j_m]f - U[j_1, \ldots, j_m]g \|_2^2 \\
&= C^2 \sum_{(j_1, \ldots, j_{m}) \in \mathbb{Z}^{m}} \||U[j_1, \ldots, j_{m-1}]f  \ast \psi_{j_m}| - |U[j_1, \ldots, j_{m-1}]g  \ast \psi_{j_m}|\|_2^2 \\
&\leq C^2 \sum_{(j_1, \ldots, j_{m}) \in \mathbb{Z}^{m}} \|U[j_1, \ldots, j_{m-1}]f  \ast \psi_{j_m} - U[j_1, \ldots, j_{m-1}]g  \ast \psi_{j_m}\|_2^2 \\
& \leq C^4 \sum_{(j_1, \ldots, j_{m-1}) \in \mathbb{Z}^{m-1}} \|U[j_1, \ldots, j_{m-1}]f  - U[j_1, \ldots, j_{m-1}]g  \|_2^2 \\
& \vdots \\
& \leq C^{2(m+1)} \|f-g\|_{\Lb^2(\mathcal{M})}^2.
\end{align*}
Thus, the claim is proven.
\end{proof}

\begin{corollary}
Suppose that $G$ satisfies the conditions of Theorem \ref{wavelet frame bounded in l2} and let $\{\psi_j\}_{j \in \mathbb{Z}}$ be a set of wavelets generated by using $G$ in Equation \ref{eq: wavelet definition}. Then we have
$$\|\overline{S}_2^m f\|^2_{\ell^2(\mathbb{Z}^m)} \leq C^{2m} \|f\|_{\Lb^2(\mathcal{M})}^2$$
for all $f \in \Lb^2(\mathcal{M})$ and all $m \geq 1$.
\end{corollary}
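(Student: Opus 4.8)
The plan is to obtain this as an immediate consequence of Theorem~\ref{thm: q = 2 nonexpansive} by specializing the second argument to the zero function. The only thing to verify first is that the $m$-layer propagator annihilates the zero signal. Since $0 \ast \psi_{j_1} = 0$ and $|0| = 0$, a trivial induction on the number of layers gives $U[j_1,\ldots,j_m]0 = 0$ for every multi-index $(j_1,\ldots,j_m)$, whence $\overline{S}_2^m[j_1,\ldots,j_m]0 = \|U[j_1,\ldots,j_m]0\|_{\Lb^2(\mathcal{M})} = 0$ and therefore $\|\overline{S}_2^m 0\|_{\ell^2(\mathbb{Z}^m)} = 0$.

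With this observation, applying Theorem~\ref{thm: q = 2 nonexpansive} to the pair $(f,0)$ gives
$$\|\overline{S}_2^m f\|_{\ell^2(\mathbb{Z}^m)}^2 = \|\overline{S}_2^m f - \overline{S}_2^m 0\|_{\ell^2(\mathbb{Z}^m)}^2 \leq C^{2m}\|f - 0\|_{\Lb^2(\mathcal{M})}^2 = C^{2m}\|f\|_{\Lb^2(\mathcal{M})}^2,$$
which is exactly the asserted bound, valid for all $f \in \Lb^2(\mathcal{M})$ and all $m \geq 1$.

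Alternatively, one could run the same recursion used in the proof of Theorem~\ref{thm: q = 2 nonexpansive} but without subtracting $g$: for any intermediate propagator output $h$ one has $\sum_{j \in \mathbb{Z}} \|h \ast \psi_j\|_{\Lb^2(\mathcal{M})}^2 = C^2\|h\|_{\Lb^2(\mathcal{M})}^2$ by Theorem~\ref{wavelet frame bounded in l2}, and since the modulus operator preserves the $\Lb^2(\mathcal{M})$ norm, peeling off one layer at a time accumulates a factor $C^2$ per layer, yielding $C^{2m}$ after $m$ steps. There is no genuine obstacle here; the only points worth stating carefully are that $U[\cdot]0 = 0$ and that $\||h|\|_{\Lb^2(\mathcal{M})} = \|h\|_{\Lb^2(\mathcal{M})}$, both of which are immediate.
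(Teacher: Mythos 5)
Your proof is correct and matches the paper's intent: the paper states this as an unproved corollary of Theorem \ref{thm: q = 2 nonexpansive}, and the evident route is exactly your specialization $g = 0$ together with the trivial observation that $\overline{S}_2^m 0 = 0$. Nothing further is needed.
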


For proper invariance, we provide a theorem that demonstrates that the $2$-nonwindowed geometric scattering transform is invariant to isometries.
\begin{theorem} %isometry invariance
Let $\xi \in \text{Isom}(\mathcal{M}, \mathcal{M}')$, and let $f \in \Lb^2(\mathcal{M})$. Define $f' = V_\xi f$ and let $\left(\overline{S}_2^{m}\right)'$ be the corresponding $2$-nonwindowed geometric scattering transform on $\mathcal{M}'$ produced by a littlewood paley wavelet satisfying the conditions described in Theorem \ref{wavelet frame bounded in l2}. We have $\left(\overline{S}_2^{m}\right)'f' = \overline{S}_2^m f$.
\end{theorem}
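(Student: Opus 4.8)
The plan is to transport the isometry $\xi$ through every building block of the scattering cascade and then observe that the $\Lb^2$ norm is isometry invariant. The one structural fact doing all the work is that an isometry $\xi \in \text{Isom}(\mathcal{M},\mathcal{M}')$ conjugates the Laplace--Beltrami operators, $\Delta_{\mathcal{M}'} V_\xi = V_\xi \Delta_{\mathcal{M}}$ (the Laplace--Beltrami operator is a Riemannian invariant, so pullback by an isometry commutes with it), together with the fact that $V_\xi$ is a unitary map $\Lb^2(\mathcal{M}) \to \Lb^2(\mathcal{M}')$ since $\xi$ pushes the Riemannian volume of $\mathcal{M}$ onto that of $\mathcal{M}'$. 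First I would record these two facts, and note that $\xi^{-1}$ is again an isometry, so the assignment $f \mapsto f' = V_\xi f$ is a bijection $\Lb^2(\mathcal{M}) \to \Lb^2(\mathcal{M}')$ (this is implicit in speaking of ``the corresponding transform on $\mathcal{M}'$'').

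Next I would track the spectral data. If $\{e_n\}_{n\in\mathbb{N}_0}$ is an orthonormal eigenbasis of $-\Delta_{\mathcal{M}}$ with eigenvalues $\{\lambda_n\}$ listed in increasing order, then applying $\Delta_{\mathcal{M}'} V_\xi = V_\xi \Delta_{\mathcal{M}}$ shows that $\{V_\xi e_n\}_{n\in\mathbb{N}_0}$ is an orthonormal eigenbasis of $-\Delta_{\mathcal{M}'}$ with the same eigenvalue list. In particular the spectra coincide as multisets, so the wavelets $\{\psi_j'\}$ on $\mathcal{M}'$, being defined through the same generating function $G$ via Equation \eqref{eq: wavelet definition}, have $\widehat{\psi_j'}(n) = \widehat{\psi_j}(n)$ for every $n$; here it is essential that convolution with a spectral filter is a function of the operator $-\Delta$ alone and hence independent of the choice of eigenbasis, which is exactly the point of restricting to spectral filters. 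Since $\widehat{f'}(n) = \langle V_\xi f, V_\xi e_n\rangle_{\Lb^2(\mathcal{M}')} = \langle f, e_n\rangle_{\Lb^2(\mathcal{M})} = \widehat{f}(n)$, the spectral definition of convolution gives $f' \ast \psi_j' = \sum_n \widehat{f}(n)\widehat{\psi_j}(n)\,V_\xi e_n = V_\xi(f\ast\psi_j)$, i.e. $\Psi_j' V_\xi = V_\xi \Psi_j$. Because the modulus acts pointwise, $|V_\xi h| = V_\xi |h|$, so $U'[j]V_\xi = V_\xi U[j]$, and a one-line induction on $m$ yields $U'[j_1,\dots,j_m]f' = V_\xi\!\big(U[j_1,\dots,j_m]f\big)$ for every multi-index.

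Finally I would take norms: since $V_\xi$ preserves the Riemannian volume, $\|V_\xi h\|_{\Lb^2(\mathcal{M}')} = \|h\|_{\Lb^2(\mathcal{M})}$, whence $\big(\overline{S}_2^m\big)'f'[j_1,\dots,j_m] = \big\|U'[j_1,\dots,j_m]f'\big\|_{\Lb^2(\mathcal{M}')} = \big\|U[j_1,\dots,j_m]f\big\|_{\Lb^2(\mathcal{M})} = \overline{S}_2^m f[j_1,\dots,j_m]$ for each $(j_1,\dots,j_m)\in\mathbb{Z}^m$; these coefficientwise equalities immediately give $\big(\overline{S}_2^m\big)'f' = \overline{S}_2^m f$ (and in particular equality of the $\ell^2(\mathbb{Z}^m)$ norms). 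The only step needing genuine care is the middle one, namely verifying that the wavelet filters on the two manifolds are ``the same'' despite the freedom in choosing eigenbases; everything else is bookkeeping. I expect that to be the main (mild) obstacle, resolved precisely by the spectral-filter hypothesis together with the coincidence of the two spectra established via $\Delta_{\mathcal{M}'}V_\xi = V_\xi\Delta_{\mathcal{M}}$.
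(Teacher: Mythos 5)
Your proposal is correct and follows essentially the same route as the paper: commute $V_\xi$ through each propagator $U[j_1,\dots,j_m]$ and then use that $V_\xi$ preserves the $\Lb^2$ norm, giving coefficientwise equality. The only difference is that you derive the key commutation $\Psi_j' V_\xi = V_\xi \Psi_j$ from scratch via the conjugation of the Laplace--Beltrami operators and the basis-independence of spectral filters, whereas the paper simply cites this as Theorem 2.1 of its reference on geometric scattering; your version is a self-contained (and correct) expansion of that citation.
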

\begin{proof} We see that $\overline{S}_2[\emptyset]f = \|f\|_2 = \|V_\xi f\|_2$ since $V_\xi$ is an isometry. Now suppose that we consider $p = (j_1, \ldots, j_m)$. Then since convolution using a spectral filter commutes with isometries and modulus operators (see Theorem 2.1 in \cite{geometric}), 
\begin{align*}
\overline{S}_2^m[j_1, \ldots, j_m] f &= \|U[p]f\|_{\Lb^2(\mathcal{M})} \\
&= \|V_\xi U[p]f\|_{\Lb^2(\mathcal{M})} \\
&= \|U[p] V_\xi f\|_{\Lb^2(\mathcal{M})} \\
&= \|U[p] f'\|_{\Lb^2(\mathcal{M})} \\
&= \left(\overline{S}_2^{m}\right)'[j_1, \ldots, j_m] f'.
\end{align*} 
%\Yang{For the third equality, is it a well know result that convolution commutes with isometry?} \textcolor{blue}{Albert: it is because of the filter construction. It's similar to the noncompact case we provided a proof for previously. I will get a citation for a proof of the compact case.}
Thus, we can see that each layer is isometry invariant.
\end{proof}

\subsection{The $q$-nonwindowed Geometric Scattering Norm}
Now we prove the $q$-nonwindowed Geometric Scattering Transforms, for $q \in (1,2)$, are bounded operators with respect to \eqref{eqn: scattering norm} under mild assumptions.

\begin{theorem} \label{q norm nonexpansive}
Suppose that we choose wavelets $\{\psi_{j}\}_{j \in \mathbb{Z}}$ generated by using $G \in \mathcal{S}(\mathbb{R}^{+})$ in Equation \ref{eq: wavelet definition}, $G$ satisfies the conditions of Theorem \ref{wavelet frame bounded in l2}, and $c_1 c_2 < 2$. Then
$$\|\overline{S}_q^mf- \overline{S}_q^m g\|^q_{\ell^2(\mathbb{Z}^m)} \leq  C_q^m\|f - g\|_{\Lb^q(\mathcal{M})}^q$$
for all $f, g \in \Lb^q(\mathcal{M})$, for all $m \geq 1$, and some constant $C_q$ dependent on $q$.
\end{theorem}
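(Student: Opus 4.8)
The plan is to extract a single-layer estimate from Corollary \ref{operator extension theorem} and then propagate it through the wavelet--modulus cascade, in the same telescoping spirit as the $q=2$ argument in Theorem \ref{thm: q = 2 nonexpansive}. The first step would be to establish the following ``one-step'' bound: for every $h \in \Lb^q(\mathcal{M})$ with $q \in (1,2)$,
\[
\Big(\sum_{j \in \mathbb{Z}} \|h \ast \psi_j\|_{\Lb^q(\mathcal{M})}^2\Big)^{q/2} \leq C_q\,\|h\|_{\Lb^q(\mathcal{M})}^q ,
\]
where $C_q$ is the constant from Corollary \ref{operator extension theorem}. To prove it, I would note that the left-hand side is the $q$-th power of the $\ell^2(\mathbb{Z})$ norm of the sequence $\big(\|h\ast\psi_j\|_{\Lb^q(\mathcal{M})}\big)_{j\in\mathbb{Z}}$, while $\|\vec{T}h\|_{\Lb^q_{\ell^2(\mathbb{Z})}(\mathcal{M})}^q = \int_{\mathcal{M}}\big(\sum_{j}|h\ast\psi_j(x)|^2\big)^{q/2}\,d\mu(x)$ is the same quantity with the $\ell^2$ and $\Lb^q$ norms applied in the opposite order. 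Since $q \le 2$, Minkowski's integral inequality allows the discrete $\ell^2$ norm to be pulled outside the $\Lb^q(\mathcal{M})$ integral, i.e. $\big(\sum_j\|h\ast\psi_j\|_{\Lb^q(\mathcal{M})}^2\big)^{1/2} \le \|\vec{T}h\|_{\Lb^q_{\ell^2(\mathbb{Z})}(\mathcal{M})}$, and then Corollary \ref{operator extension theorem} closes the estimate. As a by-product, the pointwise bound $|h\ast\psi_j| \le \big(\sum_{j'}|h\ast\psi_{j'}|^2\big)^{1/2}$ together with Corollary \ref{operator extension theorem} shows $h\ast\psi_j \in \Lb^q(\mathcal{M})$, and since the modulus preserves $\Lb^q(\mathcal{M})$ norms this gives by induction that every propagated signal $U[j_1,\dots,j_k]f$ belongs to $\Lb^q(\mathcal{M})$, so the one-step bound is always applicable inside the cascade.

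Next I would carry out the recursion on $m$. With $p' := (j_1,\dots,j_{m-1})$, the reverse triangle inequality for $\|\cdot\|_{\Lb^q(\mathcal{M})}$ and the pointwise inequality $\bigl|\,|a|-|b|\,\bigr| \le |a-b|$ give
\[
\bigl|\overline{S}_q^m[j_1,\dots,j_m]f - \overline{S}_q^m[j_1,\dots,j_m]g\bigr| \;\le\; \bigl\|\bigl(U[p']f - U[p']g\bigr)\ast\psi_{j_m}\bigr\|_{\Lb^q(\mathcal{M})} .
\]
Squaring, summing over $j_m\in\mathbb{Z}$, and applying the one-step bound with $h = U[p']f - U[p']g$ yields
\[
\sum_{j_m\in\mathbb{Z}}\bigl|\overline{S}_q^m[j_1,\dots,j_m]f - \overline{S}_q^m[j_1,\dots,j_m]g\bigr|^2 \;\le\; C_q^{2/q}\,\bigl\|U[p']f - U[p']g\bigr\|_{\Lb^q(\mathcal{M})}^2 .
\]
Summing successively over $j_{m-1}, j_{m-2}, \dots, j_1$ and reapplying this estimate at each stage (the last application with $U[\emptyset]f - U[\emptyset]g = f-g$) would give $\sum_{(j_1,\dots,j_m)\in\mathbb{Z}^m}\bigl|\overline{S}_q^m[j_1,\dots,j_m]f - \overline{S}_q^m[j_1,\dots,j_m]g\bigr|^2 \le \bigl(C_q^{2/q}\bigr)^m\|f-g\|_{\Lb^q(\mathcal{M})}^2$; raising to the power $q/2$ then produces the theorem with constant $C_q^m$.

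The hard part is the one-step bound, and essentially all of that difficulty has already been paid for in Corollary \ref{operator extension theorem}, whose proof contains the real content (the Calder\'on--Zygmund decomposition, the weak-$(1,1)$ estimate for $\vec{T}$, and the regularity hypothesis $c_1 c_2 < 2$). Granting that corollary, the only remaining subtleties are bookkeeping ones: getting the direction of Minkowski's integral inequality correct --- it is exactly the condition $q \le 2$ that lets the discrete $\ell^2$ norm sit outside the $\Lb^q$ integral --- and confirming that each $U[j_1,\dots,j_k]f$ stays in $\Lb^q(\mathcal{M})$ so that the one-step bound can be reapplied at every layer.
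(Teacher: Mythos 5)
Your proposal is correct and follows essentially the same route as the paper's proof: the reverse triangle inequality and $\bigl|\,|a|-|b|\,\bigr|\le|a-b|$ reduce each layer to a bound on $\bigl(\sum_j\|h\ast\psi_j\|_{\Lb^q(\mathcal{M})}^2\bigr)^{q/2}$, which Minkowski's integral inequality (valid since $q\le 2$) and Corollary \ref{operator extension theorem} control, and the cascade is handled by induction on $m$. Your version is in fact slightly cleaner in that it explicitly propagates $\sum_{p}\|U[p]f-U[p]g\|_{\Lb^q(\mathcal{M})}^2$ through the recursion, which is the quantity the paper's induction implicitly relies on.
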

\begin{proof}
We start by providing a proof for the case of $m = 1$:
\begin{align*}
\|\overline{S}_q f- \overline{S}_q g\|^q_{\ell^2(\mathbb{Z}^m)}  &= \left(\sum_{j \in \Z} | \overline{S}_q[j]f - \overline{S}_q[j] g|^2\right)^{q/2} \\
& = \left(\sum_{j \in \Z} |\|U[j]f\|_q - \|U[j]g\|_q|^2\right)^{q/2} \\
& \leq \left(\sum_{j \in \Z} \|U[j]f - U[j]g\|_q^2\right)^{q/2} \\
&= \left(\sum_{j \in \Z} \left(\int_{\mathcal{M}} |U[j]f(x) - U[j]g(x)|^q \, d \mu(x)\right)^{2/q}\right)^{q/2}
\end{align*}
Via Minkowski's Integral Intequality, 
\begin{align*}
&\left(\sum_{j \in \Z} \left(\int_{\mathcal{M}} |U[j]f(x) - U[j]g(x)|^q \, d \mu(x)\right)^{2/q}\right)^{q/2}\\
&\leq \int_{\mathcal{M}} \left( \sum_{j \in \Z} |U[j]f(x) - U[j]g(x)|^2 \right)^{q/2} \, d \mu(x) \\
&\leq \int_{\mathcal{M}} \left( \sum_{j \in \Z} |(f \ast \psi_j)(x) - (g \ast \psi_j)(x)|^2 \right)^{q/2} \, d \mu(x) \\
&\leq \|\vec{T}(f-g)\|_{\Lb^q_{\ell^2(\mathbb{Z})}(\mathcal{M})}^q.
\end{align*} Now apply Corollary \ref{operator extension theorem} to get 
$$\|\vec{T}(f-g)\|_{\Lb^q_{\ell^2(\mathbb{Z})}(\mathcal{M})}^q \leq C_q \| f-g\|_{\Lb^q(\mathcal{M})}^q.$$
Now assume that for some $m \geq 1$, we have
$$\|\overline{S}_q^{m}f- \overline{S}_q^{m} g\|^q_{\ell^2(\mathbb{Z}^m)} \leq C_q^m\|f - g\|_{\Lb^q(\mathcal{M})}^q.$$

Similar to above, when we consider the case with $m+1$, we can mimic the steps above to get 
\begin{align*}
&\|\overline{S}_q^{m+1} f- \overline{S}_q^{m+1} g\|^q_{\ell^2(\mathbb{Z}^m)}  \\
&= \left(\sum_{j_{m+1} \in \Z} \cdots \sum_{j_1 \in \Z} | \overline{S}_q^{m+1}[j_1, \ldots, j_{m+1}]f - \overline{S}_q^{m+1}[j_1, \ldots, j_{m+1}] g|^2\right)^{q/2} \\
& = \left(\sum_{(j_1, \ldots, j_m) \in \Z^m} \cdots \sum_{j_{m+1} \in \Z} \left(\int_{\mathcal{M}} |U[j_1, \ldots, j_{m+1}]f(x) - U[j_1, \ldots, j_{m+1}]g(x)|^q \, d \mu(x)\right)^{2/q}\right)^{q/2} \\
& = \left(\sum_{(j_1, \ldots, j_m) \in \Z^m}  \left(\sum_{j_{m+1} \in \Z} \left(\int_{\mathcal{M}} |U[j_1, \ldots, j_{m+1}]f(x) - U[j_1, \ldots, j_{m+1}]g(x)|^q \, d \mu(x)\right)^{2/q}\right)^{\frac{q}{2} \cdot \frac{2}{q}}\right)^{q/2} \\
& \leq \left(\sum_{(j_1, \ldots, j_m) \in \Z^m}  \left(\int_{\mathcal{M}} \left(\sum_{j_{m+1} \in \Z} |U[j_1, \ldots, j_{m+1}]f(x) - U[j_1, \ldots, j_{m+1}]g(x)|^2 \right)^{q/2} \, d \mu(x)\right)^{2/q}\right)^{q/2} \\
& \leq  \left(\sum_{(j_1, \ldots, j_m) \in \Z^m}  \left(\int_{\mathcal{M}} \left(\sum_{j_{m+1} \in \Z} |(U[j_1, \ldots, j_m]f \ast \psi_{j_{m+1}})(x) - (U[j_1, \ldots, j_m]g \ast \psi_{j_{m+1}})(x)|^2 \right)^{q/2} \, d \mu(x)\right)^{2/q}\right)^{q/2} \\
&=  \left(\sum_{(j_1, \ldots, j_m) \in \Z^m}  \|\vec{T}(U[j_1, \ldots, j_m]f-U[j_1, \ldots, j_m]g)\|_{\Lb^q_{\ell^2(\mathbb{Z})}(\mathcal{M})}^2\right)^{q/2} \\
&=  C_q\left(\sum_{(j_1, \ldots, j_m) \in \Z^m}  \|U[j_1, \ldots, j_m]f-U[j_1, \ldots, j_m]g\|_{\Lb^q(\mathcal{M})}^2\right)^{q/2}.
\end{align*}

Now we see that we can apply the induction hypothesis to get
\begin{align*}
\left(\sum_{(j_1, \ldots, j_m) \in \Z^m}  \|U[j_1, \ldots, j_m]f-U[j_1, \ldots, j_m]g\|_{\Lb^q_{\ell^2(\mathbb{Z})}(\mathcal{M})}^2\right)^{q/2} &= \|\overline{S}_q^{m}[j_1, \ldots, j_{m}] f- \overline{S}_q^{m}[j_1, \ldots, j_{m}] g\|^q_{\ell^2(\mathbb{Z}^m)} \\
&\leq C_q^m \|f-g\|_{\Lb^q(\mathcal{M})}^q.
\end{align*}
\end{proof}

\begin{corollary} \label{q norm well-defined}
Suppose that we choose wavelets $\{\psi_{j}\}_{j \in \mathbb{Z}}$ generated by using $G \in \mathcal{S}(\mathbb{R}^{+})$ in Equation \ref{eq: wavelet definition}, $G$ satisfies the conditions of Theorem \ref{wavelet frame bounded in l2}, and $c_1 c_2 < 2$. Then
$$\|\overline{S}_q^mf\|^q_{\ell^2(\mathbb{Z}^m)} \leq  C_q^m\|f \|_{\Lb^q(\mathcal{M})}^q$$
for all $f\in \Lb^q(\mathcal{M})$, for all $m \geq 1$, and some constant $C_q$ dependent on $q$.
\end{corollary}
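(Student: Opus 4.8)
The plan is to obtain Corollary \ref{q norm well-defined} as an immediate specialization of Theorem \ref{q norm nonexpansive}, taking $g$ to be the zero function. The only thing to check is that the $q$-nonwindowed geometric scattering coefficients of the zero function vanish, so that $\overline{S}_q^m g$ reduces to the zero element of $\ell^2(\Z^m)$ and the left-hand side of the stability estimate becomes $\|\overline{S}_q^m f\|_{\ell^2(\Z^m)}^q$ while the right-hand side becomes $C_q^m \|f\|_{\Lb^q(\mathcal{M})}^q$.

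Concretely, I would first observe that for any $j \in \Z$ we have $\Psi_j 0 = 0 \ast \psi_j = 0$ (the spectral multiplier sends the zero function to the zero function), hence $U[j] 0 = |0| = 0$, and by an immediate induction on the number of layers $U[j_1,\ldots,j_m] 0 = 0$ for every multi-index. Therefore $\overline{S}_q^m[j_1,\ldots,j_m] 0 = \|U[j_1,\ldots,j_m] 0\|_{\Lb^q(\mathcal{M})} = 0$ for all $(j_1,\ldots,j_m) \in \Z^m$, so $\overline{S}_q^m 0$ is the zero sequence and $\|\overline{S}_q^m 0\|_{\ell^2(\Z^m)} = 0$.

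With this in hand, I would simply invoke Theorem \ref{q norm nonexpansive} with $g = 0$: under the stated hypotheses on $G$, on the wavelets $\{\psi_j\}_{j \in \Z}$, and the condition $c_1 c_2 < 2$, it gives
$$
\|\overline{S}_q^m f - \overline{S}_q^m 0\|_{\ell^2(\Z^m)}^q \leq C_q^m \|f - 0\|_{\Lb^q(\mathcal{M})}^q
$$
for all $f \in \Lb^q(\mathcal{M})$ and all $m \geq 1$. Substituting $\overline{S}_q^m 0 = 0$ yields $\|\overline{S}_q^m f\|_{\ell^2(\Z^m)}^q \leq C_q^m \|f\|_{\Lb^q(\mathcal{M})}^q$, which is precisely the claim. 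Since the real content, the vector-valued weak-$(1,1)$ bound and interpolation behind Corollary \ref{operator extension theorem} and its iteration in Theorem \ref{q norm nonexpansive}, has already been established, there is no substantive obstacle here; the only (trivial) point requiring care is the verification that propagators annihilate the zero function, which follows from linearity of the spectral convolution and homogeneity of the modulus.
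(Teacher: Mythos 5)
Your proposal is correct and matches the paper's intent exactly: the corollary is stated without proof precisely because it is the specialization $g = 0$ of Theorem \ref{q norm nonexpansive}, and your verification that the propagators annihilate the zero function (so $\overline{S}_q^m 0 = 0$) is the only detail needed.
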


For the next theorem, we omit the proof since it is identical to the case when $q = 2$, but we state it for completeness.
\begin{theorem} %isometry invariance
Let $\xi \in \text{Isom}(\mathcal{M}, \mathcal{M}')$, and let $f \in \Lb^q(\mathcal{M})$. Define $f' = V_\xi f$ and let $\left(\overline{S}_q^{m}\right)'$ be the corresponding $q$-nonwindowed geometric scattering transform on $\mathcal{M}'$ produced by wavelets $\{\psi_{j}\}_{j \in \mathbb{Z}}$ using $G \in \mathcal{S}(\mathbb{R}^{+})$ in Equation \ref{eq: wavelet definition}, $G$ satisfies the conditions of Theorem \ref{wavelet frame bounded in l2}, and $c_1 c_2 < 2$ in Lemma \ref{manifold regularity}. We have $\left(\overline{S}_q^{m}\right)'f' = \overline{S}_q^m f$.
\end{theorem}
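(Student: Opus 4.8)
The plan is to mirror the proof of the $q=2$ case essentially verbatim; the only new ingredient is that the change of variables induced by an isometry preserves the $\Lb^q$ norm for every $q \in (1,2)$, not merely for $q=2$, and this is just as immediate.

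First I would dispose of the empty path. Since $\xi \in \text{Isom}(\mathcal{M},\mathcal{M}')$ is a Riemannian isometry, it is a smooth diffeomorphism whose differential is a linear isometry at every point, so the pushforward of the Riemannian volume $\mu$ under $\xi$ is the Riemannian volume $\mu'$ on $\mathcal{M}'$. By the change-of-variables formula, $\|V_\xi h\|_{\Lb^q(\mathcal{M}')}^q = \int_{\mathcal{M}'} |h(\xi^{-1}x)|^q\, d\mu'(x) = \int_{\mathcal{M}} |h(y)|^q\, d\mu(y) = \|h\|_{\Lb^q(\mathcal{M})}^q$ for all $1 \le q < \infty$ and all $h \in \Lb^q(\mathcal{M})$. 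In particular $\overline{S}_q[\emptyset]f = \|f\|_{\Lb^q(\mathcal{M})} = \|f'\|_{\Lb^q(\mathcal{M}')} = \left(\overline{S}_q^{0}\right)'[\emptyset]f'$.

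Next, for a nonempty path $p = (j_1,\ldots,j_m) \in \mathbb{Z}^m$, I would invoke the equivariance of the geometric wavelet propagator recorded in Theorem 2.1 of \cite{geometric}: convolution against a spectral filter intertwines the isometry action, i.e. $(V_\xi h) \ast \psi_j' = V_\xi(h \ast \psi_j)$ where $\psi_j'$ is the corresponding wavelet on $\mathcal{M}'$ (the spectra of $-\Delta_{\mathcal{M}}$ and $-\Delta_{\mathcal{M}'}$ agree and $\xi$ carries eigenfunctions to eigenfunctions), and the pointwise modulus obviously commutes with the pointwise action $V_\xi$. Iterating over the $m$ stages of the cascade gives $U[p]V_\xi f = V_\xi U[p]f$; every intermediate function lies in $\Lb^q(\mathcal{M})$ by Corollary \ref{q norm well-defined}, so these manipulations are legitimate.

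Finally I would combine the two observations: for each $p$ we get $\overline{S}_q^m[p]f = \|U[p]f\|_{\Lb^q(\mathcal{M})} = \|V_\xi U[p]f\|_{\Lb^q(\mathcal{M}')} = \|U[p]V_\xi f\|_{\Lb^q(\mathcal{M}')} = \|U[p]f'\|_{\Lb^q(\mathcal{M}')} = \left(\overline{S}_q^{m}\right)'[p]f'$, and since this holds coefficient by coefficient the two families coincide, which is the claim. I do not expect any real obstacle here: the exponent $q$ never interacts with the argument, because isometry invariance is driven entirely by (i) volume preservation, which is $q$-independent, and (ii) the commutation of $U[p]$ with $V_\xi$, already established in \cite{geometric}. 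The only point deserving a word of care is that the coefficients are well defined in $\Lb^q$, which is exactly the content of Corollary \ref{q norm well-defined}.
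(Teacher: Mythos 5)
Your proposal is correct and matches the paper's intended argument: the paper explicitly omits the proof as ``identical to the case when $q=2$,'' and your two ingredients --- volume preservation of isometries giving $\Lb^q$-norm invariance, and the commutation $U[p]V_\xi f = V_\xi U[p]f$ from Theorem 2.1 of \cite{geometric} --- are exactly the ones used in the $q=2$ proof. The added remarks on well-definedness via Corollary \ref{q norm well-defined} are a reasonable bit of extra care but do not change the route.
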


\section{Diffeomorphism Stability}
In this section, we provide diffeomorphism stability results for a generalization of bandlimited functions, $\lambda$-bandlimited functions, which are defined as functions which satisfy $\hat{f}(k) = \langle f, \phi_k \rangle = 0$ whenever $\lambda_k \geq \lambda$. 

\begin{lemma} [\cite{geometric}] \label{lemma: bandlimited L^2 stability}
Suppose $\xi \in \text{Diff}(\mathcal{M})$. If $f \in \Lb^2(\mathcal{M})$ is $\lambda$-bandlimited,  then 
$$\|f - V_\xi f\|_{\Lb^2(\mathcal{M})} \leq C(\mathcal{M}) \lambda^n \|\xi\|_\infty \|f\|_{\Lb^2(\mathcal{M})}$$
for some constant $C(\mathcal{M})$.
\end{lemma}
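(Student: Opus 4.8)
The plan is to prove this via the spectral decomposition of $f$ together with a pointwise bound on how far $\xi$ moves points, exactly in the spirit of Lemma 3.2 (or the analogous bandlimited estimate) in \cite{geometric}. Write $f = \sum_{\lambda_k < \lambda} \hat{f}(k) e_k$, so that $f - V_\xi f = \sum_{\lambda_k < \lambda} \hat{f}(k)\,(e_k - V_\xi e_k)$. Since the sum is finite, it suffices to control $\|e_k - V_\xi e_k\|_{\Lb^2(\mathcal{M})}$ for each $k$ with $\lambda_k < \lambda$ and then reassemble.

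**First I would** estimate $\|e_k - V_\xi e_k\|_{\Lb^2(\mathcal{M})}^2 = \int_{\mathcal{M}} |e_k(x) - e_k(\xi^{-1}x)|^2\, d\mu(x)$. Here the key ingredient is a gradient (Bernstein-type) bound for Laplace eigenfunctions on a compact manifold: $\|\nabla e_k\|_\infty \lesssim \lambda_k^{(n+1)/2}$ (or, at the level of an individual eigenfunction, $\|\nabla e_k\|_\infty \leq C(\mathcal{M})\lambda_k^{1/2}\|e_k\|_\infty$ combined with $\|e_k\|_\infty \leq C(\mathcal{M})\lambda_k^{(n-1)/4}$). Along a minimizing geodesic from $\xi^{-1}x$ to $x$, which has length $r(x,\xi^{-1}x) = r(\xi x', x') \leq \|\xi\|_\infty$, the fundamental theorem of calculus gives $|e_k(x) - e_k(\xi^{-1}x)| \leq \|\nabla e_k\|_\infty \, \|\xi\|_\infty$. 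Integrating over $\mathcal{M}$ and using $\mu(\mathcal{M}) < \infty$ yields $\|e_k - V_\xi e_k\|_{\Lb^2(\mathcal{M})} \leq C(\mathcal{M}) \lambda_k^{n/2} \|\xi\|_\infty$, where the precise power of $\lambda_k$ is arranged so that the final bound reads $\lambda^n$. (One should double-check the exact exponent bookkeeping against \cite{geometric}; the statement wants $\lambda^n$, so the per-eigenfunction bound must be roughly $\lambda_k^{n/2}$ after combining the sup-norm and gradient estimates.)

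**Next I would** reassemble via the triangle inequality and Cauchy–Schwarz:
\begin{align*}
\|f - V_\xi f\|_{\Lb^2(\mathcal{M})} &\leq \sum_{\lambda_k < \lambda} |\hat{f}(k)|\, \|e_k - V_\xi e_k\|_{\Lb^2(\mathcal{M})} \\
&\leq C(\mathcal{M})\lambda^{n/2}\|\xi\|_\infty \sum_{\lambda_k < \lambda} |\hat{f}(k)|.
\end{align*}
To turn $\sum_{\lambda_k < \lambda}|\hat f(k)|$ into $\|f\|_{\Lb^2(\mathcal{M})}$ times an acceptable factor, apply Cauchy–Schwarz: $\sum_{\lambda_k<\lambda}|\hat f(k)| \leq N(\lambda)^{1/2}\big(\sum_k |\hat f(k)|^2\big)^{1/2} = N(\lambda)^{1/2}\|f\|_{\Lb^2(\mathcal{M})}$, where $N(\lambda) = \#\{k : \lambda_k < \lambda\}$. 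By Weyl's law, $N(\lambda) \leq C(\mathcal{M})\lambda^{n/2}$, so this contributes another $\lambda^{n/4}$; again the exact exponents need to be tracked so everything collapses into the claimed $\lambda^n$. Collecting constants into a single $C(\mathcal{M})$ finishes the proof.

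**The main obstacle** is purely the bookkeeping of the power of $\lambda$: making the eigenfunction sup-norm bound, the eigenfunction gradient bound, and Weyl's counting bound combine to give exactly $\lambda^n$ rather than some other power. The cleanest route is probably to avoid summing absolute values of Fourier coefficients and instead bound the operator $f \mapsto f - V_\xi f$ directly on the bandlimited subspace: write $\|f - V_\xi f\|_{\Lb^2}$ in terms of the kernel of the projection onto $\{\lambda_k < \lambda\}$ and use the known pointwise estimates on that spectral projection kernel (of the type in Lemma \ref{kernel decya condition}), differentiated once. This is exactly the mechanism used in \cite{geometric}, and I would follow that argument, citing Lemma \ref{lemma: bandlimited L^2 stability}'s source for the spectral-kernel estimates rather than re-deriving them.
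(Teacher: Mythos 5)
Your proposal is essentially correct, and your closing paragraph lands on exactly the mechanism the paper uses: this lemma is imported from \cite{geometric}, but the paper reproduces the argument in its proof of Lemma \ref{q norm stability} (stated there to be ``nearly identical'' to the $q=2$ case), namely write $f = P_\lambda f$ with the spectral projection kernel $K^{(\lambda)}(x,y)=\sum_{\lambda_k\leq\lambda}e_k(x)\overline{e_k(y)}$, estimate $|K^{(\lambda)}(x,y)-K^{(\lambda)}(\xi^{-1}x,y)|\leq \|\xi\|_\infty\|\nabla K^{(\lambda)}\|_\infty$, apply Cauchy--Schwarz in $y$, and invoke the kernel gradient bound $\|\nabla K^{(\lambda)}\|_\infty\leq C(\mathcal{M})\lambda^n$ (Lemma H.1 of \cite{geometric}). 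Your primary route --- decomposing into individual eigenfunctions, using per-eigenfunction sup/gradient bounds, and recombining with Weyl's law --- is a genuinely different decomposition, and the bookkeeping you were worried about does in fact close: with the H\"ormander-type bounds $\|e_k\|_\infty\lesssim\lambda_k^{(n-1)/4}$ and $\|\nabla e_k\|_\infty\lesssim\lambda_k^{(n+1)/4}$ (note your exponent $(n+1)/2$ is the frequency-normalized version; in terms of the eigenvalue it is $(n+1)/4$), together with $N(\lambda)^{1/2}\lesssim\lambda^{n/4}$, you get a total power $\lambda^{(2n+1)/4}$, which is $\leq C\lambda^n$ for $n\geq 1$ (after absorbing the range $\lambda<1$ into the constant, since there $f$ is constant or the spectrum is bounded below by $\lambda_1$). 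So your route actually yields a slightly sharper power of $\lambda$ than the stated bound. What the kernel route buys is that it avoids eigenfunction sup-norm estimates and Weyl asymptotics entirely, packaging everything into a single gradient bound on the projection kernel, and it transfers verbatim to the $\Lb^q$ setting via H\"older, which is why the paper organizes the argument that way.
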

% \Yang{I don't understand the assumption. The class of diffeomophisms that can be written as an isometry composed with a diffeomorphism should be exactly the class of any diffeomophisms, so why not just assume $\xi$ is a diffeomorphism?}

% \textcolor{blue}{Albert: I based this on Theorem 4.1 in https://arxiv.org/pdf/1905.10448.pdf. Is there a reference for what you mentioned? I don't remember if that is true.}

% \Yang{Any diffeomorphism $\xi:M\rightarrow M$ can be trivially written as $id\circ\xi$ where $id:M\rightarrow M$ is the identity operator.}

% \textcolor{blue}{Albert: That condition isn't needed actually. Thank you for catching that.}

% \Yang{I don't know if there are other reasons why people wrote it that way. Please double check.}

% \textcolor{blue}{Albert: It is because the proof in the other paper uses results for windowed scattering coefficients, which aren't invariant to isometries between different manifolds. The unwindowed scattering transform proof is from taking $J \to \infty$ in the windowed proof.}

\begin{theorem} %diffeomorphism stabilty
Suppose $\xi \in \text{Diff}(\mathcal{M})$. Let $f \in \Lb^2(\mathcal{M})$, and assume that $\psi$ is a wavelet family satisfying the conditions of Theorem \ref{wavelet frame bounded in l2}. Then  
$$\|\overline{S}_2^mf - \overline{S}_2^m V_\xi f\|_{\ell^2(\mathbb{Z}^m)} \leq C(\mathcal{M}) \lambda^n \|\xi\|_\infty \|f\|_{\Lb^2(\mathcal{M})}.$$
\end{theorem}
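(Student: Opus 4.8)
The statement is to be read with the standing hypothesis of this section that $f$ is $\lambda$-bandlimited; under that assumption the plan is simply to chain two estimates that are already available in the paper. First I would apply Theorem~\ref{thm: q = 2 nonexpansive} with the choice $g = V_\xi f$. This immediately gives
$$\|\overline{S}_2^m f - \overline{S}_2^m V_\xi f\|_{\ell^2(\mathbb{Z}^m)} \leq C^m \, \|f - V_\xi f\|_{\Lb^2(\mathcal{M})},$$
where $C = G(0)$ is the frame constant from Theorem~\ref{wavelet frame bounded in l2}. The point of this step is that it reduces the question about the nonlinear, multilayer scattering representation to controlling the single linear quantity $\|f - V_\xi f\|_{\Lb^2(\mathcal{M})}$.

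Second, since $f$ is $\lambda$-bandlimited, I would invoke Lemma~\ref{lemma: bandlimited L^2 stability}, which yields
$$\|f - V_\xi f\|_{\Lb^2(\mathcal{M})} \leq C(\mathcal{M}) \, \lambda^n \, \|\xi\|_\infty \, \|f\|_{\Lb^2(\mathcal{M})}.$$
Substituting this into the previous display gives
$$\|\overline{S}_2^m f - \overline{S}_2^m V_\xi f\|_{\ell^2(\mathbb{Z}^m)} \leq C^m \, C(\mathcal{M}) \, \lambda^n \, \|\xi\|_\infty \, \|f\|_{\Lb^2(\mathcal{M})},$$
and after renaming the manifold-dependent constant (absorbing the factor $C^m$; note that when the wavelet frame is normalized to be nonexpansive, i.e. $C = 1$, the bound is uniform in $m$) one obtains exactly the claimed inequality.

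\textbf{Main obstacle.} Within the scope of this theorem there is essentially no obstacle: both ingredients are black boxes established earlier, namely the $\Lb^2$ Lipschitz bound for $\overline{S}_2^m$ (Theorem~\ref{thm: q = 2 nonexpansive}) and the bandlimited diffeomorphism-stability estimate (Lemma~\ref{lemma: bandlimited L^2 stability}). The only points that need care are (i) making explicit that $f$ must be taken $\lambda$-bandlimited for the right-hand side to be meaningful, and (ii) tracking how the constant depends on the number of layers $m$ through the factor $C^m$, which is harmless precisely in the nonexpansive normalization $C \leq 1$. The genuinely analytic content — bounding $\|f - V_\xi f\|_{\Lb^2(\mathcal{M})}$ by $\lambda^n \|\xi\|_\infty \|f\|_{\Lb^2(\mathcal{M})}$, which rests on pointwise and gradient estimates for Laplace--Beltrami eigenfunctions below the spectral cutoff $\lambda$ — is already encapsulated in Lemma~\ref{lemma: bandlimited L^2 stability} and does not need to be reproved here.
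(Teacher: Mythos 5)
Your proposal is correct and is exactly the paper's argument: apply Theorem~\ref{thm: q = 2 nonexpansive} with $g = V_\xi f$ and then invoke Lemma~\ref{lemma: bandlimited L^2 stability}. Your observations that the $\lambda$-bandlimited hypothesis must be read into the statement and that the constant picks up a factor $C^m$ are both accurate and, if anything, slightly more careful than the paper's one-line proof.
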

\begin{proof} We apply Theorem \ref{thm: q = 2 nonexpansive}, so Lemma \ref{lemma: bandlimited L^2 stability} gives the desired result.
\end{proof}

\subsection{Stability Results for the $q$-nonwindowed Geometric Scattering Norm}
\begin{lemma} \label{q norm stability}
Suppose $\xi \in \text{Diff}(\mathcal{M})$. If $f \in \Lb^q(\mathcal{M})$ is $\lambda$-bandlimited, then 
$$\|f - V_\xi f\|_{\Lb^q(\mathcal{M})}\leq C(\mathcal{M}) \lambda^n \|\xi\|_\infty \|f\|_{\Lb^q(\mathcal{M})}$$
for some constant $C_q(\mathcal{M})$.
\end{lemma}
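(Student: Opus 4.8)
The plan is to reduce the $\Lb^q$ estimate to the already-established $\Lb^2$ estimate of Lemma \ref{lemma: bandlimited L^2 stability}, exploiting compactness of $\mathcal{M}$ together with the fact that $\lambda$-bandlimited functions live in a finite-dimensional subspace on which all $\Lb^p$ norms are comparable. First I would fix $q \in (1,2)$ and let $E_\lambda := \operatorname{span}\{e_k : \lambda_k < \lambda\}$, which is finite-dimensional since $-\Delta$ has discrete spectrum on a compact manifold; both $f$ and — crucially — $V_\xi f$ need to be handled, but note $V_\xi f$ need not be $\lambda$-bandlimited, so one cannot simply apply norm-equivalence to $f - V_\xi f$ directly.

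The cleaner route is to follow the proof of Lemma \ref{lemma: bandlimited L^2 stability} in \cite{geometric} but carry the estimate in $\Lb^q$ from the start. Writing $f = \sum_{\lambda_k < \lambda} \hat f(k) e_k$, one has $f - V_\xi f = \sum_{\lambda_k<\lambda}\hat f(k)\,(e_k - V_\xi e_k)$, so
\begin{align*}
\|f - V_\xi f\|_{\Lb^q(\mathcal{M})} \leq \sum_{\lambda_k < \lambda} |\hat f(k)|\, \|e_k - V_\xi e_k\|_{\Lb^q(\mathcal{M})}.
\end{align*}
Each eigenfunction $e_k$ is smooth, so $\|e_k - V_\xi e_k\|_{\Lb^q(\mathcal M)} \le \mu(\mathcal M)^{1/q}\|e_k - V_\xi e_k\|_\infty \le \mu(\mathcal M)^{1/q}\,\|\nabla e_k\|_\infty\,\|\xi\|_\infty$, and the gradient bound $\|\nabla e_k\|_\infty \lesssim_{\mathcal M} \lambda_k^{(n+1)/2}$ (or the cruder $\lesssim \lambda_k^{n}$-type bound used in \cite{geometric}) controls this by $C(\mathcal{M})\lambda^{n}\|\xi\|_\infty$ uniformly for $\lambda_k < \lambda$. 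Combining with Cauchy–Schwarz in $k$ over the at-most-$N_\lambda := \dim E_\lambda$ indices, and then bounding $N_\lambda^{1/2}$ and $\|\hat f\|_{\ell^2}=\|f\|_{\Lb^2}$ against $\|f\|_{\Lb^q}$ via the finite-dimensional norm equivalence $\|f\|_{\Lb^2(\mathcal M)} \le C_{\lambda,q}\|f\|_{\Lb^q(\mathcal M)}$ on $E_\lambda$, folds everything into a single constant $C_q(\mathcal{M})\lambda^n$.

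I expect the main obstacle to be bookkeeping the $\lambda$-dependence of the constant so that the final bound genuinely scales like $\lambda^n\|\xi\|_\infty$ rather than carrying hidden growth: the number of eigenvalues below $\lambda$ grows (Weyl's law gives $N_\lambda \sim c\,\lambda^{n/2}$), and the sup-norm-to-$\Lb^q$-norm comparison constant on $E_\lambda$ also depends on $\lambda$, so one must be careful that these factors, together with the eigenfunction gradient bounds, still collapse into the stated power $\lambda^n$ — or else state the result with the exponent actually obtained. An alternative that sidesteps sharp constants is to mirror \cite{geometric} verbatim: show $V_\xi$ restricted to bandlimited inputs differs from the identity by an operator whose $\Lb^q\to\Lb^q$ norm is $\le C(\mathcal M)\lambda^n\|\xi\|_\infty$, using that the Schwartz kernel of $(\mathrm{Id}-V_\xi)$ composed with the bandlimiting projection is smooth and supported near the diagonal, then invoke Schur's test in place of the $\Lb^2$ spectral argument. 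Either way the essential input is that bandlimiting is a smoothing projection onto a finite-dimensional space, so the non-Hilbertian nature of $\Lb^q$ costs only a dimension-dependent constant.
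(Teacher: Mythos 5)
Your ``alternative'' route is in fact the paper's proof, essentially verbatim: the paper writes $f - V_\xi f = (\mathrm{Id}-V_\xi)P_\lambda f$, observes that this operator has kernel $K^{(\lambda)}(x,y)-K^{(\lambda)}(\xi^{-1}(x),y)$ with $K^{(\lambda)}(x,y)=\sum_{\lambda_k\le\lambda}e_k(x)\overline{e_k(y)}$, bounds that kernel uniformly by $\|\xi\|_\infty\|\nabla K^{(\lambda)}\|_\infty$, applies H\"older in $y$ (which on a finite-measure space plays the role of your Schur test), and closes with the single input $\|\nabla K^{(\lambda)}\|_\infty\le C(\mathcal M)\lambda^n$ from Lemma H.1 of \cite{geometric}. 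The advantage of working with the projection kernel as a whole, rather than eigenfunction by eigenfunction, is precisely that the Weyl-law counting and the eigenfunction sup/gradient bounds are already packaged into the $\lambda^n$ estimate for $\nabla K^{(\lambda)}$, so no separate norm-equivalence constant on $E_\lambda$ ever appears. Your observation that $V_\xi f$ need not be bandlimited, so one cannot apply norm equivalence to $f-V_\xi f$ directly, is correct and is exactly why the kernel route is the clean one.

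Your primary route, however, does not deliver the stated exponent as written, and the obstacle you flagged is real. With the crude bound $\|\nabla e_k\|_\infty\lesssim\lambda_k^{(n+1)/2}$ you propose, Cauchy--Schwarz over the $N_\lambda\sim\lambda^{n/2}$ indices gives a factor $\lambda^{(n+1)/2}\cdot\lambda^{n/4}$ against $\|f\|_{\Lb^2}$, and converting $\|f\|_{\Lb^2}$ to $\|f\|_{\Lb^q}$ on $E_\lambda$ via a Nikolskii-type inequality costs a further $\lambda^{\frac n2(\frac1q-\frac12)}$; for $q$ near $1$ the total is on the order of $\lambda^{n+1/2}$, strictly worse than $\lambda^n$. (With the sharp H\"ormander bounds $\|e_k\|_\infty\lesssim\lambda_k^{(n-1)/4}$ and $\|\nabla e_k\|_\infty\lesssim\lambda_k^{(n+1)/4}$ the bookkeeping does collapse to an exponent $\le n$, but you would need to commit to those bounds and carry out the interpolation; the version you wrote down proves a weaker statement.) So either sharpen the eigenfunction estimates in your first route, or simply adopt your second route, which is the argument the paper actually gives.
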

\begin{proof}
Since $f$ is $\lambda$-bandlimited, $f \in \Lb^2(\mathcal{M})$ as well, and the proof is nearly identical to the proof of the case when $q = 2$, but we provide the steps for completeness. We define $\pi_\lambda$ be the operator that projects a function $f \in \Lb^2(\mathcal{M})$ onto the eigenspace $E_\lambda$ and define the projection operator
$$P_\lambda := \sum_{\lambda_n \leq \lambda} \pi_{\lambda_n}$$
with kernel
$$K^{(\lambda)}(x,y) = \sum_{\lambda_n \leq \lambda} e_n(x) \overline{e_n(y)}.$$
%\Yang{This statement is generally not true. The condition that $f$ is $\lambda$-bandlimited is crucial here. Another question: doesn't the $\lambda$-bandlimited property force $f\in L^2(M)$?} \textcolor{blue}{Albert: good catch. Bandlimiting is crucial here. It would imply $f \in L^2,$ similar to the Euclidean case} 
We have $P_\lambda f = f$ $\mu$-almost-everywhere. Thus, via Holder's inequality,
\begin{align*} \label{q norm scattering stability}
|f(x)- V_\xi f(x)| &= |P_\lambda f(x) -  V_\xi P_\lambda f(x)| \\
&= \left|\int_{\mathcal{M} }K^{(\lambda)}(x,y) f(y) \, dy - \int_{\mathcal{M} }K^{(\lambda)}(\xi^{-1}(x),y) f(y) \, dy\right| \\
& \leq \left|\int_{\mathcal{M} }(K^{(\lambda)}(x,y) - K^{(\lambda)}(\xi^{-1}(x),y)) f(y) \, dy\right| \\
& \leq \|f\|_{\Lb^q(\mathcal{M})}\left(\int_{\mathcal{M} }|K^{(\lambda)}(x,y) - K^{(\lambda)}(\xi^{-1}(x),y)|^p \, dy\right)^{1/p} \\
& \leq C_{q, \text{Vol}}(\mathcal{M})\|f\|_{\Lb^q(\mathcal{M})} \|\xi\|_\infty \|\nabla K^{(\lambda)}\|_\infty
\end{align*} 
%\Yang{In the second line, I think $V_\xi$ is applied to the integral $\int_M K^{(\lambda)}(x,y)f(y) dy$ but not to $f$.} \textcolor{blue}{Albert: you are correct. Fixed.}
for some constant $C_{q, \text{Vol}}(\mathcal{M})$ dependent on $q$ and the volume of the manifold. Now, by Lemma H.1 in \cite{geometric}, we have
$$\|\nabla K^{(\lambda)} \|_\infty \leq C_q(\mathcal{M}) \lambda^n.$$
Thus, the proof is complete.
\end{proof}
%\Yang{In the notation $K^{(\lambda)}(\xi^{-1}(x),y))$, I don't understand why is $\xi^{-1}$ applied to $x$ but not $\xi$ applied to $y$.}
%
%\textcolor{blue}{Albert: corrected. See corrected proof.}

\begin{theorem} %diffeomorphism stabilty
Suppose $\xi \in \text{Diff}(\mathcal{M})$. Let $f \in \Lb^q(\mathcal{M})$ be $\lambda$-bandlimited. Additionally, suppose that we choose wavelets $\{\psi_{j}\}_{j \in \mathbb{Z}}$ generated by using $G \in \mathcal{S}(\mathbb{R}^{+})$ in Equation \ref{eq: wavelet definition}, $G$ satisfies the conditions of Theorem \ref{wavelet frame bounded in l2}, and $c_1 c_2 < 2$. Then
$$\|\overline{S}_q^mf - \overline{S}_q^m V_\xi f\|_{\ell^2(\mathbb{Z}^m)} \leq C(\mathcal{M}) \lambda^n \|\xi\|_\infty \|f\|_{\Lb^q(\mathcal{M})}$$
for some constant $C(\mathcal{M})$.
\end{theorem}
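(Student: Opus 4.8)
The plan is to combine the boundedness of $\overline{S}_q^m$ established in Theorem \ref{q norm nonexpansive} with the $\Lb^q$ deformation estimate for $\lambda$-bandlimited functions from Lemma \ref{q norm stability}. First I would observe that since $f$ is $\lambda$-bandlimited it is a finite linear combination of Laplace--Beltrami eigenfunctions, hence smooth; as $\xi$ is a diffeomorphism of the compact manifold $\mathcal{M}$, the function $V_\xi f = f\circ\xi^{-1}$ is again smooth and therefore lies in $\Lb^q(\mathcal{M})$. This legitimizes applying Theorem \ref{q norm nonexpansive} to the pair $f$ and $g := V_\xi f$, which yields
$$\|\overline{S}_q^m f - \overline{S}_q^m V_\xi f\|_{\ell^2(\mathbb{Z}^m)}^q \leq C_q^m \, \|f - V_\xi f\|_{\Lb^q(\mathcal{M})}^q.$$

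Next I would apply Lemma \ref{q norm stability} directly to the right-hand side: since $f$ is $\lambda$-bandlimited,
$$\|f - V_\xi f\|_{\Lb^q(\mathcal{M})} \leq C_q(\mathcal{M})\, \lambda^n \, \|\xi\|_\infty \, \|f\|_{\Lb^q(\mathcal{M})}.$$
Substituting this bound, raising to the power $1/q$, and collecting $C_q^{m/q}\, C_q(\mathcal{M})$ into a single constant $C(\mathcal{M})$ (which also depends on $m$ and $q$) gives exactly the claimed inequality. This mirrors the proof of the $q=2$ case, where Theorem \ref{thm: q = 2 nonexpansive} is composed with Lemma \ref{lemma: bandlimited L^2 stability}.

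There is essentially no serious obstacle here, since the statement is a formal consequence of the two cited results. The only points requiring a word of care are (i) checking that $V_\xi f \in \Lb^q(\mathcal{M})$ so that Theorem \ref{q norm nonexpansive} is applicable --- handled above via smoothness of bandlimited functions and compactness of $\mathcal{M}$ --- and (ii) acknowledging that the absorbed constant depends on the number of layers $m$ and on $q$, even though it is written simply as $C(\mathcal{M})$, consistent with the notational conventions used elsewhere in the paper.
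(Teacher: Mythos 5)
Your proposal is correct and follows essentially the same route as the paper: apply Theorem \ref{q norm nonexpansive} to the pair $f$ and $V_\xi f$, then bound $\|f - V_\xi f\|_{\Lb^q(\mathcal{M})}$ via Lemma \ref{q norm stability}. Your added remarks about $V_\xi f \in \Lb^q(\mathcal{M})$ and the dependence of the absorbed constant on $m$ and $q$ are sensible but do not change the argument.
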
 
\begin{proof} We apply Theorem \ref{q norm nonexpansive} to get
$$\|\overline{S}_q^mf - \overline{S}_q^m V_\xi f\|_{\ell^2(\mathbb{Z}^m)} \leq C_q \|f-V_\xi f\|_{\Lb^q(\mathcal{M})}.$$
By Lemma \ref{q norm stability}, we have
$$\|f-V_\xi f\|_{\Lb^q(\mathcal{M})} \leq C(\mathcal{M}) \lambda^n \|\xi\|_\infty \|f\|_{\Lb^q(\mathcal{M})},$$
which gives the desired result.
\end{proof}

\section{Conclusions and Future Work}
We have provided a framework for understanding nonwindowed scattering coefficients. In particular, we provide a weighted measure for distortion between nonwindowed scattering coefficients, showed our weighted measure is well-defined mapping for $\Lb^q(\mathcal{M})$ functions, and showed that nonwindowed scattering coefficients are stable to diffeomorphisms for $\lambda$-bandlimited functions. 

For future work, it is of interest to see if it is possible to extend our results to manifolds that are not restricted the conditions present in Sections 4 and 5. Additionally, what are other manifolds that satisfy the conditions present in sections 4 and 5? These questions will be left to future work.

\section{Acknowledgements}
We would like to thank Michael Perlmutter for providing feedback, which improved the quality of this initial draft.

\bibliographystyle{elsarticle-num}
\bibliography{paper.bib}

\end{document}